\lstdefinestyle{mystyle}{
	commentstyle=\color{teal},
    keywordstyle=\color{Green},
    stringstyle=\color{Maroon},
	basicstyle=\ttfamily\footnotesize,
    breakatwhitespace=false,         
    breaklines=true,                 
    captionpos=b,                    
    keepspaces=true,                 
    showspaces=false,                
    showstringspaces=false,
    showtabs=false,                  
    tabsize=2
}
\renewcommand{\geq}{\geqslant}
\renewcommand{\leq}{\leqslant}
\newcommand{\R}{\mathbb{R}}
\newcommand{\Z}{\mathbb{Z}}
\newcommand{\C}{\mathbb{C}}
\newcommand{\inv}[1]{{#1}^{-1}}
\newtheorem{theorem}{Theorem}[section]
\newtheorem{proposition}{Proposition}[section]
\newtheorem{lemma}{Lemma}[section]
\newtheorem*{theorem*}{Theorem}
\newtheorem*{lemma*}{Lemma}
\newtheorem*{corollary*}{Corollary}
\newtheorem*{definition*}{Definition}
\newtheorem{definition}{Definition}
\title{The Number of Solutions to the Trinomial Thue Equation}
\author{Greg Knapp}
\begin{document}

\begin{abstract}
	In this paper, we study the number of integer pair solutions to the equation $|F(x,y)| = 1$ where $F(x,y) \in \Z[x,y]$ is an irreducible (over $\Z$) binary form with degree $n \geq 3$ and exactly three nonzero summands.  In particular, we improve Thomas' explicit upper bounds on the number of solutions to this equation (see \cite{Thomas2000}).  For instance, when $n \geq 219$, we show that there are no more than 32 integer pair solutions to this equation when $n$ is odd and no more than 40 integer pair solutions to this equation when $n$ is even, an improvement on Thomas' work in \cite{Thomas2000}, where he shows that there are no more than 38 such solutions when $n$ is odd and no more than 48 such solutions when $n$ is even.
\end{abstract}

\maketitle


\section{Introduction}

Axel Thue, in \cite{Thue1909}, showed that the equation
\begin{align}\label{eq:generalThueEq}
	F(x,y) = h
\end{align}
has only finitely many integer pair solutions when $F(x,y) \in \Z[x,y]$ is an irreducible (over $\Z$) homogeneous polynomial of degree $n \geq 3$ and $h \in \Z$ (for the purposes of this paper, only integer pair solutions will be considered).  A polynomial which is homogeneous in two variables is called a \emph{binary form} and the equation \eqref{eq:generalThueEq} is called \emph{Thue's equation}.  As a consequence, the equation
\begin{align}\label{eq:thueInequality}
	|F(x,y)| \leq h
\end{align}
also has finitely many solutions.  The equation \eqref{eq:thueInequality} is called \emph{Thue's Inequality}. Note that we are using all of the stated hypotheses here: if $F(x,y)$ has a linear factor for instance (say $rx - sy$ divides $F(x,y)$ for some relatively prime $r,s \in \Z$), then there are infinitely many integer pair solutions to $F(x,y) = 0$---namely, the integral multiples of the pair $(s,r)$---and hence, there are infinitely many integer pair solutions to \eqref{eq:thueInequality}.  If $n =\deg(F)$ has $n \leq 2$, then the family of Pell equations show that there can be infinitely many solutions to \eqref{eq:thueInequality} even when $F(x,y)$ is irreducible.  Thue's conclusion does hold if every irreducible factor of $F(x,y)$ has degree at least three, but we assume that $F(x,y)$ is irreducible for simplicity.  Several natural questions arise such as

\begin{enumerate}[\hspace{0.75cm}1.]
	\item How many solutions are there to \eqref{eq:thueInequality}?
	\item How large are solutions to \eqref{eq:thueInequality}?
	\item On which features of $F$ and $h$ do the solutions to \eqref{eq:thueInequality} depend?
\end{enumerate}

This paper largely handles the first question, though of course the second and third questions are related.  In particular, the number of nonzero summands of $F(x,y)$ significantly impacts the number of solutions to \eqref{eq:thueInequality}.\footnote{The rough reasoning for this is as follows: a solution $(p,q)$ to \eqref{eq:thueInequality} corresponds to a good rational approximation $p/q$ to a root of $f(X) := F(X,1)$.  The only roots of $f(X)$ which ought to allow good rational approximations are the real roots of $f(X)$ and it is the number of nonzero summands of $f(X)$ that controls the number of real roots of $f(X)$, as seen in Lemma 1 of \cite{Schmidt1987}}

\begin{definition}
	If a polynomial has exactly two nonzero summands, it is called a \emph{binomial}; if it has exactly three nonzero summands, it is called a \emph{trinomial}; and if it has exactly four nonzero summands, it is called a \emph{tetranomial}.
\end{definition}

Since the number of nonzero summands plays such an important role, authors such as Bennett \cite{Bennett2001}, Evertse \cite{Evertse1982}, Grundman and Wisniewski \cite{Grundman2013}, Hyyr\"o \cite{Hyyroe1964}, Mueller \cite{Mueller1987}, Mueller and Schmidt \cite{Mueller1986}, and Thomas \cite{Thomas2000} have examined binomial, trinomial, and tetranomial Thue equations in hopes to get a better handle on how the number of nonzero summands of $F(x,y)$ affect the number of solutions to Thue equations.\\

In this paper, we focus on improving explicit bounds for the number of solutions to the Thue equation
\begin{align}\label{eq:thueEq}
	|F(x,y)| = 1
\end{align}
in the particular case that $F(x,y)$ is a trinomial.  In this setting, Thomas, in \cite{Thomas2000}, showed that there are no more than $2v(n)w(n) + 8$ distinct integer pair solutions to $|F(x,y)| = 1$ when $F(x,y) \in \Z[x,y]$ is a trinomial irreducible binary form of degree $n \geq 3$ and $w(n)$ and $v(n)$ are piecewise defined as follows:

\begin{align*}
	v(n) = \begin{cases}
	3 & \text{if } n \text{ is odd}\\
	4 & \text{if } n \text{ is even}
	\end{cases}
\end{align*}
and
\begin{table}[h!]
\centering
	\begin{tabular}{| c || c | c | c | c | c | c | c | c | c |}\hline
	$n$    & 5\tablefootnote{There is an error in the proof of Lemma 4.1 in \cite{Thomas2000}: it is claimed that $\frac{b^t - 1}{b - 1} < b^t$ which is not the case for the choice of $b = 1.5$ when $n = 5$.  Tracing this error through to its conclusion, the author believes that this is not correctable.} & 6  & 7  & 8  & 9 & 10--11 & 12--16 & 17--37 & $\geq 38$\\\hline
	$w(n)$ & $27^\dag$ & 16 & 13 & 11 & 9 & 8      & 7      & 6      & 5\\\hline
	\end{tabular}
\end{table}

We are able to improve the bounds that Thomas provides and we have the following theorem.
\begin{theorem}\label{totalTrinomialSolutions}
	Let $F(x,y) = h_nx^n + h_kx^ky^{n - k} + h_0y^n$ where $h_n,h_k,h_0,n,k \in \Z$ with $0 < k < n$.  Suppose that $F(x,y)$ is irreducible over $\Z[x,y]$ and $n \geq 6$.  Then there are at most $2v(n)z(n) + 8$ distinct integer pair solutions to the equation $|F(x,y)| = 1$ where $v(n) = 3$ if $n$ is odd, $v(n) = 4$ if $n$ is even, and $z(n)$ is defined by the following table.
	\begin{center}
		\begin{tabular}{| c || c | c | c | c | c | c | c | c | c | c |}\hline
		$n$ 		& $6$  & $7$  & $8$  & $9$ & $10$--$11$ & $12$--$16$ & $17$--$38$ & $39$--$218$ & $\geq 219$\\\hline
		$z(n)$ & $15$ & $12$ & $11$ & $9$ & $8$     & $7$     & $6$     & $5$       & $4$\\\hline
		\end{tabular}
	\end{center}
\end{theorem}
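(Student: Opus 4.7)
The plan is to follow the architecture of Thomas's argument in \cite{Thomas2000} while tightening the per-root approximation count. Let $f(X) := F(X,1)$, a trinomial of degree $n$. Any integer pair solution $(p,q)$ with $q \neq 0$ satisfies $|q|^n\,|f(p/q)| = |F(p,q)| = 1$, which forces $p/q$ to be a very good rational approximation to some root $\alpha$ of $f$. Only the real roots of $f$ can receive arbitrarily good rational approximations, and since $f$ is a trinomial, the classical bound referenced in the footnote (Lemma 1 of \cite{Schmidt1987}) limits the number of relevant real roots to $3$ if $n$ is odd and $4$ if $n$ is even, which accounts for the factor $v(n)$.

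I would then partition the solutions into a ``small'' set, whose members have absolute height below an explicit cutoff, and a ``large'' set. Following Thomas, a direct case analysis using the irreducibility hypothesis and the trinomial structure bounds the number of small solutions by $8$ (including candidates near $(\pm 1, 0)$ and $(0,\pm 1)$ together with their $\pm$-partners under the involution $(p,q) \mapsto (-p,-q)$, which preserves $|F(p,q)|$ since $F$ is homogeneous). For each large solution, I would assign to it the real root of $f$ that it approximates best and pair $(p,q)$ with $(-p,-q)$; the remaining task is then to prove that each real root receives at most $z(n)$ unordered $\pm$-pairs of good approximations, which combines with the factor $v(n)$ and the small-solution bound to give the claimed $2v(n)z(n) + 8$.

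The heart of the argument is a two-sided squeeze on the large solutions attached to a fixed real root $\alpha$. On the one hand, a gap principle produces a lower bound on consecutive denominators: if $p_1/q_1$ and $p_2/q_2$ are two distinct good approximations with $|q_1| < |q_2|$, then $|q_2|$ exceeds an explicit function of $|q_1|$, proved by combining $p_1 q_2 - p_2 q_1 \in \Z\setminus\{0\}$ with the distance estimates $|p_i/q_i - \alpha| \leq c/|q_i|^n$ coming from $|F(p_i,q_i)| = 1$ and the factorization of $f$ over $\C$. On the other hand, the hypergeometric or Pad\'e-approximation method applied to the algebraic function defined by the trinomial (in the spirit of Chudnovsky, Heimonen, and the works cited in the introduction) yields an effective irrationality measure $\mu < n$ for $\alpha$, hence an upper bound $|q| \leq Q(n)$ for any good approximation beyond a threshold. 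Iterating the gap inequality at most $z(n)$ times before colliding with $Q(n)$ yields the count, and sharpening either the gap constant or the Pad\'e-type upper bound over the appropriate ranges of $n$ gives the improved $z(n)$ in place of Thomas's $w(n)$.

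The main obstacle will be obtaining the sharpened hypergeometric bound uniformly in the trinomial parameters, particularly for the large-$n$ regime in which $z(n)$ drops to $4$ for $n \geq 219$. This requires careful control of the discriminant, the leading coefficients, and the spacing of roots of the trinomial, precisely where Thomas's estimates are loose. Once that input is in place, a correspondingly refined gap principle specialized to the trinomial factorization pattern produces the remaining entries of the table; the small-solution count and the real-root count are routine by comparison with \cite{Thomas2000}.
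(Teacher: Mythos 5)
Your high-level outline (reduce to rational approximations to roots of $f$, split into small and large solutions, bound each real root's share via a gap argument) matches the shape of the paper, but several of your structural and technical choices diverge from what actually works, and a couple would fail outright.

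First, the bookkeeping. The additive $+8$ in $2v(n)z(n)+8$ does not come from bounding the ``small'' solutions by $8$. It comes from the trivial solutions with $|pq|\leq 1$, which are set aside before any analytic estimate. The paper introduces \emph{regular} pairs ($p\neq 0$, $q>0$, $|p|\neq q$) and shows there are at most $8$ non-regular solutions and that every solution with $|pq|>1$ has exactly one of $(p,q)$, $(-p,-q)$ regular. The small/large dichotomy in the paper is applied only to the ``special'' regular solutions and \emph{both} sides of that split feed into the factor $z(n)$: $z(n)=T+Z$ where $T$ bounds small special solutions and $Z$ bounds large ones. Your plan collapses the small side into the $+8$ and would leave you with no mechanism to bound the finitely-many medium-sized approximations whose denominators lie below the Bombieri--Schmidt threshold $Y_0$ but above the trivial range.

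Second, $v(n)$ is not simply the number of real roots. The bound $v(n)\geq R_F + C_F$ counts real roots together with \emph{proper critical points} of $f$. Approximations that ``belong to'' a proper critical point are handled separately (via Thomas's Theorem 2.2) and contribute to the count. If you only account for real roots, your $v(n)$ factor is unjustified in the even-degree case: an even-degree trinomial can have up to $2$ real roots and $2$ proper critical points, and Thomas's analysis is what makes the $4$ correct.

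Third, the large-solution estimate. You propose to obtain an effective irrationality measure $\mu < n$ for $\alpha$ by hypergeometric/Pad\'e methods. This is not what the paper (or Thomas) does, and it would not work at this generality: the hypergeometric method requires very specific algebraic structure (e.g.\ binomial forms $ax^n - by^n$, or roots expressible via $(1+z)^{k/n}$ with controlled archimedean and $p$-adic data) and does not yield uniform effective measures for arbitrary irreducible trinomials. The paper instead invokes Lemma~2 of Bombieri--Schmidt \cite{Bombieri1987}, a Thue--Siegel type counting lemma which bounds the number of large good approximations directly, without passing through an explicit irrationality exponent. Making that lemma applicable requires precisely the two checks the paper proves as Lemmas~\ref{largeYF} and \ref{belongsToImpliesClosest}: that $Y_F\geq Y_0$, and that a large special solution belonging to $\alpha$ really is nearest to $\alpha$ among all complex roots of $f$.

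Finally, and most importantly, the actual source of the improvement $z(n) < w(n)$ is not a sharper gap constant nor a sharper approximation estimate. The gap inequality used here is Thomas's own Theorem~4.1, unchanged. The improvement is in the \emph{counting lemma} applied to the gap inequality: Lemma~\ref{GPImprovement} replaces the standard logarithmic-iteration count (as in Lemma~1 of \cite{Saradha2017}) with a sharp version, and the gain compounds because the denominators grow doubly exponentially. Your proposal, which focuses on improving the hypergeometric or gap inputs, would not locate the savings that actually produce the smaller $z(n)$ values in the table.
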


This result is primarily derived from an improvement in efficiency to a counting technique associated to what is known as the gap principle (see Lemma \ref{GPImprovement}).


\section{Counting With Gaps}

The main technical accomplishment of this paper is the following version of a counting technique often used in conjunction with the ``gap principle.''

\begin{lemma} \label{GPImprovement}
	Suppose that $L, M, T, p,y_0,\dots,y_\ell \in \R_{>0}$ satisfy the following conditions:
	\begin{enumerate}
	 	\item $L \leq y_0 \leq \dots \leq y_\ell \leq M$
	 	\item $p > 2$
	 	\item $L^{p-2} > T$
	 	\item $y_{i+1} \geq \inv{T}y_i^{p-1}$ for each $0 \leq i < \ell$
	 \end{enumerate}
	 Then \[\ell \leq \frac{\log\left[\frac{\log(MT^{-1/(p-2)})}{\log\left(LT^{-1/(p-2)}\right)}\right]}{\log(p-1)}.\]
\end{lemma}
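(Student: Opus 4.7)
The plan is to normalize the recursion so that the constant $T$ disappears, at which point the problem reduces to a clean iterated exponential inequality that can be unrolled by induction.

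Concretely, I would introduce the substitution $z_i = y_i T^{-1/(p-2)}$. Under this change of variable, the recursion $y_{i+1} \geq T^{-1} y_i^{p-1}$ becomes
\[
z_{i+1} T^{1/(p-2)} \geq T^{-1} z_i^{p-1} T^{(p-1)/(p-2)} = z_i^{p-1} T^{1/(p-2)},
\]
so $z_{i+1} \geq z_i^{p-1}$, which is the main point of the substitution. The bounds $L \leq y_i \leq M$ translate to $L T^{-1/(p-2)} \leq z_i \leq M T^{-1/(p-2)}$, and hypothesis (3), namely $L^{p-2} > T$, is exactly what guarantees $z_0 \geq L T^{-1/(p-2)} > 1$ so that we may freely take logarithms.

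From $z_{i+1} \geq z_i^{p-1}$ a straightforward induction yields $z_\ell \geq z_0^{(p-1)^\ell}$. Taking logarithms (which remain positive thanks to $z_0 > 1$) gives
\[
(p-1)^\ell \leq \frac{\log z_\ell}{\log z_0} \leq \frac{\log\bigl(M T^{-1/(p-2)}\bigr)}{\log\bigl(L T^{-1/(p-2)}\bigr)},
\]
and one more logarithm produces the claimed bound on $\ell$.

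There is essentially no obstacle: the only thing to watch is the role of each hypothesis. Condition (2), $p > 2$, is needed so that $1/(p-2)$ makes sense and $\log(p-1) > 0$; condition (3) is used to force $z_0 > 1$ so the final division by $\log z_0$ preserves the inequality; condition (4) drives the induction; and condition (1) supplies the initial and terminal bounds. The only slightly delicate step is the bookkeeping of exponents of $T$ in the substitution, where the identity $-1 + (p-1)/(p-2) = 1/(p-2)$ is what makes the $T$-factors cancel cleanly on both sides.
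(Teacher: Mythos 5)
Your proof is correct and is essentially the same as the paper's: the substitution $z_i = y_i T^{-1/(p-2)}$ you make at the outset is precisely the multiplication by $T^{-1/(p-2)}$ that the paper performs after unrolling the recursion, so both arguments reduce to the same iterated inequality $z_\ell \geq z_0^{(p-1)^\ell}$ followed by two logarithms. Normalizing first, as you do, makes the geometric-sum bookkeeping in the exponent of $T$ disappear, which is a modest tidiness gain but not a different method.
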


This lemma is comparable to Lemma 1 in \cite{Saradha2017}.  However, by fixing any $L> 0$, $p > 2$, $\ell \in \Z_{>0}$, and $0 < T < L^{p-2}$, then setting $y_0 = L$, $y_i = T^{-1}y_{i-1}^{p-1}$, and $M = y_\ell$, one can see that this upper bound is sharp where the upper bound in Lemma 1 of \cite{Saradha2017} is not.

\begin{proof}
	By induction, we have

	\begin{align*}
		M 	&\geq y_\ell \geq \frac{y_{\ell - 1}^{p-1}}{T} \geq \frac{\left(\frac{y_{\ell-2}^{p-1}}{T}\right)^{p-1}}{T} = \frac{y_{\ell - 2}^{(p-1)^2}}{T \cdot T^{p-1}} \geq \cdots\\
			&\cdots\geq \frac{y_0^{(p-1)^\ell}}{T^{\sum_{j=0}^{\ell-1} (p-1)^j}} = \frac{y_0^{(p-1)^\ell}}{T^{\frac{(p-1)^\ell - 1}{p-2}}} \geq \frac{L^{(p-1)^\ell}}{T^{\frac{(p-1)^\ell - 1}{p-2}}}
	\end{align*}
	and we multiply both sides of \[M \geq \frac{L^{(p-1)^\ell}}{T^{\frac{(p-1)^\ell - 1}{p-2}}}\] by $T^{-1/(p-2)}$ to get \[MT^{-1/(p-2)} \geq \left(LT^{-1/(p-2)}\right)^{(p-1)^\ell}.\]  Taking a log on both sides (and using the fact that $LT^{-1/(p-2)} > 1$) yields \[\frac{\log\left(MT^{-1/(p-2)}\right)}{\log\left(LT^{-1/(p-2)}\right)} \geq (p-1)^\ell\] and taking logs again and using the fact that $p > 2$ yields the desired inequality.
\end{proof}


\section{The Trinomial Thue Equation}

In this section, we follow Thomas in \cite{Thomas2000} for much of our reasoning.  However, we use different notation: the parameters which Thomas calls $u$ and $v$, we call $a$ and $b$ (this aligns with similar notation used in \cite{Akhtari2020} for instance, and also makes clear the difference between these parameters---whose choice will depend on $n$---and the values $u_n$ to be defined later and $v(n)$ defined in Theorem \ref{totalTrinomialSolutions}).  The parameters which Thomas calls $b$ and $b_0$, we will call $d$ and $d_0$ due to their relation to the degree of $F(x,y)$ and also to avoid conflict with the newly named $b$.

Throughout the remainder of this section, suppose that $F(x,y) = h_nx^n + h_kx^ky^{n-k} + h_0y^n$ where $h_n,h_k,h_0,n,k \in \Z$, $0 < k < n$, and $n \geq 6$.  Suppose further that $F(x,y)$ is irreducible over $\Z[x,y]$.  Let $H = \max(|h_n|, |h_k|, |h_0|)$ be the na\"ive height of $F(x,y)$.  Any time we refer to a ``solution,'' we specifically mean a solution to equation \eqref{eq:thueEq} in $\Z^2$.

We will not give a sophisticated bound on the number of solutions $(p,q)$ with $|pq| \leq 1$ and we will consider $(p,q)$ and $(-p,-q)$ to be equivalent solutions, spurring the following definition.

\begin{definition}
	A pair $(p,q) \in \Z^2$ is called \emph{regular} if $p \neq 0$, $q > 0$, and $|p| \neq q$.
\end{definition}

If there are $r$ regular solutions to \eqref{eq:thueEq}, then there will be at most $2r + 8$ distinct solutions since for every solution $(p,q)$ with $|pq| > 1$, either $(p,q)$ or $(-p,-q)$ is regular and there are at most 8 solutions with $|pq| \leq 1$.  From this fact and Theorem \ref{trinomialGeneralCount} below, Theorem \ref{totalTrinomialSolutions} will follow.

\begin{theorem}\label{trinomialGeneralCount}
	Equation \eqref{eq:thueEq} has at most $v(n)z(n)$ regular solutions where $v(n)$ and $z(n)$ are defined in Theorem \ref{totalTrinomialSolutions}.
\end{theorem}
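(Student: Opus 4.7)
The plan is to mirror the structure of Thomas's argument in \cite{Thomas2000} while replacing the counting step with the sharp bound in Lemma \ref{GPImprovement}. The proof has four pieces: partitioning the regular solutions by the real root they approximate, establishing a gap principle, bounding the largest solution, and applying Lemma \ref{GPImprovement}.

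First, I would show that each regular solution $(p,q)$ produces a rational $p/q$ which is very close to some real root of $f(X) := F(X,1) = h_nX^n + h_kX^k + h_0$, using the fact that $|f(p/q)| = 1/q^n$ forces $p/q$ to lie within $O(q^{-n})$ of such a root. The number of real roots of $f$ is at most $3$ when $n$ is odd and at most $4$ when $n$ is even (this is Lemma 1 of \cite{Schmidt1987}, invoked in the introductory footnote), accounting for the factor $v(n)$. It therefore suffices to bound by $z(n)$ the number of regular solutions associated to a single real root $\alpha$ of $f$.

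Second, I would fix a real root $\alpha$ of $f$, list the regular solutions associated to $\alpha$ as $(p_0,q_0), \dots, (p_\ell, q_\ell)$ with $q_0 \leq q_1 \leq \cdots \leq q_\ell$, and derive a gap principle of the form
\[ q_{i+1} \geq T^{-1} q_i^{p-1} \]
for explicit constants $T > 0$ and $p > 2$ depending on $n$ and on the na\"ive height $H$ of $F$. The standard derivation applies the triangle inequality to $|p_i/q_i - p_{i+1}/q_{i+1}|$, bounded below by $1/(q_iq_{i+1})$ and above by the two approximation errors $|\alpha - p_i/q_i|$ and $|\alpha - p_{i+1}/q_{i+1}|$. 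Third, I would combine a lower bound $L$ on $q_0$ with an upper bound $M$ on $q_\ell$, the latter coming from an effective form of Baker's theorem on linear forms in logarithms along the lines used by Thomas, so that the hypotheses of Lemma \ref{GPImprovement} are met. The lemma then yields
\[ \ell \leq \frac{\log\left[\log\left(MT^{-1/(p-2)}\right)/\log\left(LT^{-1/(p-2)}\right)\right]}{\log(p-1)}, \]
and numerical analysis over each range of $n$ in the table gives $\ell + 1 \leq z(n)$. Multiplying by $v(n)$ completes the argument.

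The main obstacle is the numerical optimization at the very end. To improve Thomas's table values $w(n)$ to the smaller $z(n)$---in particular the reduction from $5$ to $4$ for $n \geq 219$, and the reductions at $n = 6, 7$---one must choose $L$, $M$, $T$, and $p$ optimally within each range so that the log-of-log expression drops below a targeted integer. The \emph{sharpness} of Lemma \ref{GPImprovement}, emphasized in the commentary immediately following its statement, is precisely what makes these reductions possible: the analogous bound in \cite{Saradha2017} is not sharp and would not suffice. The case analysis is somewhat intricate, particularly for small $n$ where the margins are tightest and per-range optimization of the exponent $p$ is necessary.
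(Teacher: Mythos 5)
Your proposal captures the role of Lemma \ref{GPImprovement} and the general ``partition by exceptional point, then count'' structure, but it has two genuine gaps.

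First, you only partition regular solutions by the real root that $p/q$ approximates, asserting that every solution is within $O(q^{-n})$ of a real root. That is false when $q$ is small: $|f(p/q)| = q^{-n}$ need not force $p/q$ near a root, since $|f|$ can be small near a \emph{proper critical point} (a local extremum where $f$ does not change sign). The paper accounts for this via the notion of \emph{exceptional points} --- real roots together with proper critical points --- and the interval decomposition (the $J_i$'s), so that every regular $p/q$ belongs to exactly one exceptional point. The factor $v(n)$ is then a bound on $R_F + C_F$ (real roots plus proper critical points), not merely on $R_F$. The actual route to Theorem \ref{trinomialGeneralCount} goes through Theorem \ref{trinomialExceptionalCount}: the count is $z(n)R_F + \ell(n)C_F$, and since $\ell(n) \leq z(n)$, this is at most $z(n)(R_F + C_F) \leq z(n)v(n)$. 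Your argument would undercount the solutions near critical points entirely.

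Second, your treatment of the endpoint $M$ would not reproduce the numerics. You propose to set $M$ to an effective upper bound on all solutions via Baker-type linear forms in logarithms and then feed a single interval $[L,M]$ into Lemma \ref{GPImprovement}. Thomas (and this paper) do something structurally different: solutions belonging to a root are split at a threshold $Y_F = H^{\chi_n}e^{\pi_n}$ into \emph{small} and \emph{large}. Only the small ones are counted via Lemma \ref{GPImprovement} (with $M = Y_F$, which is modest); the large ones are counted directly by the elementary Lemma 2 of Bombieri--Schmidt (giving the quantity $Z$ in \eqref{eq:ZDef}), after verifying the compatibility lemmas \ref{largeYF} and \ref{belongsToImpliesClosest}. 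No Baker-type bound is used anywhere, and attributing one to Thomas is a misreading. Since a Baker bound for $M$ would be doubly-exponential in $n$ and $\log H$, the resulting $\log\log M$ term would blow up the count well past $z(n)$; the two-phase approach is essential to the claimed table.
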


More specifically, let $f(x) := F(x,1)$ and set $R_F$ to be the number of real roots of $f$.  We also wish to include certain critical points, so we make the following definition:

\begin{definition}
	A critical point $\tau \in \R$ of $g(x) \in \R[x]$ is \emph{proper} if there exists a neighborhood $U$ of $\tau$ for which $g''(x)g(x) > 0$ for all $x \in U \setminus\{\tau\}$.
\end{definition}

Now let $C_F$ to be the number of proper critical points of $f(x)$.  Setting $N_F$ to be the number of regular solutions to \eqref{eq:thueEq}, we will show the following theorem.

\begin{theorem}\label{trinomialExceptionalCount}
	Let $F(x,y)$ be a trinomial of degree $n \geq 6$.  Then \[N_F \leq z(n)R_F + \ell(n)C_F\] where $\ell(n)$ is defined by the following table.

	\begin{table}[h!]
		\centering
		\begin{tabular}{| c || c | c | c |}\hline
		$n$ & $6$--$7$ & $8$ & $\geq 9$\\\hline
		$\ell(n)$ & $4$ & $3$ & $2$\\\hline
		\end{tabular}
	\end{table}
\end{theorem}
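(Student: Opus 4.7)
The plan is to associate each regular solution $(p,q)$ to an \emph{approximation target} $\tau \in \R$, which is either a real root or a proper critical point of $f(x) := F(x,1)$, and then to apply Lemma~\ref{GPImprovement} to bound the number of solutions assigned to each target. Summing over all targets will produce $N_F \le z(n) R_F + \ell(n) C_F$.

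The first step is to show that every regular solution corresponds to an exceptionally close rational approximation of some target. Since $|F(p,q)| = 1$ implies $|f(p/q)| = q^{-n}$, and $f$ is a trinomial (whose graph has a very restricted shape), the set of reals where $|f|$ is small concentrates near the real roots of $f$ and near its proper critical points---precisely those local extrema where $f$ does not change sign. Hence each regular solution with $q$ exceeding a modest threshold can be assigned to a nearest target $\tau$, and the handful of very small solutions can be absorbed into the main bound as in Thomas \cite{Thomas2000}.

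Once the targets are fixed, I would order the regular solutions associated to a given $\tau$ as $(p_0,q_0),(p_1,q_1),\dots$ with $y_i := q_i$ nondecreasing, and establish a gap principle of the form $y_{i+1} \ge T^{-1} y_i^{p-1}$. The main ingredient is the trivial lower bound $|p_i q_{i+1} - p_{i+1} q_i| \ge 1$ combined with a Taylor expansion of $f$ at $\tau$: when $\tau$ is a simple root, $|p_i/q_i - \tau|$ is controlled by $q_i^{-n}/|f'(\tau)|$, producing an effective exponent $p-1$ comparable to $n-1$; when $\tau$ is a proper critical point (so $f'(\tau)=0$), one must use $|p_i/q_i - \tau|^2$ controlled by $q_i^{-n}/|f''(\tau)|$, producing an exponent only of order $n/2 - 1$, which is the reason for the much smaller entries $\ell(n)$. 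The trinomial structure $f'(x) = x^{k-1}(nh_n x^{n-k} + kh_k)$ makes $|f'(\tau)|$ and $|f''(\tau)|$ explicitly computable in terms of $h_n, h_k, h_0$ and yields the uniform estimates on $T$ needed to feed into Lemma~\ref{GPImprovement}.

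Feeding the resulting parameters $(L,M,T,p)$ into Lemma~\ref{GPImprovement}---with $L$ a suitable lower height threshold, $M$ a Liouville-type upper bound on $q_\ell$, and $T$ controlled by the Taylor data at $\tau$---produces the claimed per-target count, and the piecewise table for $z(n)$ emerges because the quantity $\log(MT^{-1/(p-2)})/\log(LT^{-1/(p-2)})$ crosses successive powers of $p-1$ at the specific thresholds $n = 9, 12, 17, 39, 219$ listed. The main obstacle is producing sharp enough lower bounds on $|f'(\tau)|$ and $|f''(\tau)|$ to achieve the precise numerical values in the table; this requires case analysis on the signs of $h_n, h_k, h_0$ and on whether $\tau$ is near $0$, near $\pm 1$, or of moderate size. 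A secondary difficulty is handling the small degrees $n = 6, 7, 8$, where the gap principle is at its weakest and the exponent margins in Lemma~\ref{GPImprovement} are tight; here an ad hoc refinement in the spirit of Thomas's Lemma~4.1 in \cite{Thomas2000} is likely required. Once these estimates are in hand, summing $z(n)$ over the $R_F$ real roots and $\ell(n)$ over the $C_F$ proper critical points completes the proof.
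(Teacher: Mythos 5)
Your high-level picture---assign each regular solution to an exceptional point (real root or proper critical point) and count per-target using a gap principle fed into Lemma~\ref{GPImprovement}---correctly captures part of the paper's structure. But there is a structural gap in the proposal that would make the argument fail as written.

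The gap principle $y_{i+1} \ge T^{-1} y_i^{p-1}$, even the sharpened one in Lemma~\ref{GPImprovement}, only bounds the number of solutions \emph{in a bounded window} $L \le q \le M$. You propose to take ``$M$ a Liouville-type upper bound on $q_\ell$,'' but no such bound exists from Liouville's inequality alone. Since $\alpha$ has degree $n$, Liouville gives a lower bound $|p/q - \alpha| \gg q^{-n}$ of exactly the same order as the upper bound $|p/q - \alpha| \ll q^{-n}$ coming from $|f(p/q)| = q^{-n}$; the two exponents match and yield no restriction on $q$ whatsoever. An absolute upper bound on $q$ for solutions of a Thue equation is precisely the content of an \emph{effective Thue theorem}, which is vastly deeper than Liouville. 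The paper handles this by splitting solutions into \emph{small} ($q \le Y_F$, where $Y_F = H^{\chi_n} e^{\pi_n}$) and \emph{large} ($q > Y_F$): the small ones are counted with Lemma~\ref{GPImprovement} taking $M = Y_F$ (together with Thomas's refined gap inequality, Theorem~\ref{qInc}, whose $H$-dependence is tuned to cancel the $H$-dependence in $Y_F$), and the large ones are counted by invoking Lemma~2 of Bombieri--Schmidt \cite{Bombieri1987}, a Thue--Siegel-type theorem which is entirely independent of the gap-principle machinery. Without this dichotomy and without the Bombieri--Schmidt input, the argument cannot close.

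Two smaller remarks. First, your proposed gap principle from $|p_i q_{i+1} - p_{i+1} q_i| \ge 1$ plus Taylor expansion at $\tau$ is too crude to recover Thomas's Theorem~4.1, whose statement $q' > H^{d/n} p^{n^*-d} q^d / K_d(n)$ carries a delicate $H$-dependence with a tunable parameter $d$; that flexibility is what makes the small-solution count uniform in $H$. Second, the paper does not re-derive the bound $\ell(n)$ on solutions belonging to critical points---it simply cites Thomas's Theorem~2.2 for that piece. Your plan to prove it from scratch via $|f''(\tau)|$ and the gap principle would meet the same missing-upper-bound obstruction described above, so re-deriving it offers no savings.
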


We first show that Theorem \ref{trinomialExceptionalCount} implies Theorem \ref{trinomialGeneralCount}.  Since $\ell(n)$ is less than $z(n)$, we have that $z(n)R_F + \ell(n)C_F \leq z(n)(R_F + C_F)$ and one can check that that $R_F + C_F \leq v(n)$ with calculus, so we get Theorem \ref{trinomialGeneralCount}.

To prove Theorem \ref{trinomialExceptionalCount}, we need some additional setup.

\begin{definition}
	For a polynomial $g(x) \in \R[x]$, an \emph{exceptional point} of $f$ is either a real root or a proper critical point of $g(x)$
\end{definition}

Let $\mathcal{E}(f)$ be the set of exceptional points, $\tau_1 < \tau_2 < \dots < \tau_c$, of $f$.  Note that there exist improper critical points $\eta_1 < \eta_2 < \dots < \eta_{c-1}$ so that $\tau_1 < \eta_1 < \tau_2 < \eta_2 < \dots < \eta_{c-1} < \tau_c$.  Setting $\eta_0 = -\infty$ and $\eta_c = +\infty$, we can define $J_1 = (-\infty,\eta_1)$ and $J_i = [\eta_i,\eta_{i+1})$ for $1 \leq i \leq c$.

\begin{definition}
	A real number $\rho$ \emph{belongs to} $\tau_i$ (and $\tau_i$ \emph{belongs to} $\rho$) if $\rho \in J_i$.
\end{definition}

Observe that a regular pair $(p,q) \in \Z^2$ with $q \neq 0$ satisfying \eqref{eq:thueEq} corresponds to a rational number $\frac{p}{q}$ satisfying \[\left|f\left(\frac{p}{q}\right)\right| = \frac{1}{q^n}.\]  Moreover, because $|F(p,q)| = 1$ and $F(x,y) \in \Z[x,y]$, $p$ and $q$ must be relatively prime, so this correspondence is one to one.  Hence, rather than counting solutions to \eqref{eq:thueEq}, we instead count rational solutions to \[\left|f\left(\frac{x}{y}\right)\right| = \frac{1}{y^n}.\]

Thomas, in \cite{Thomas2000}, shows that the number of regular solutions $(p,q)$ of \eqref{eq:thueEq} for which there exists a critical point of $f(x)$, $\tau$, so that $\frac{p}{q}$ belongs to $\tau$ is no larger than $\ell(n)$ (see the completion of the proof of Thomas' Theorem 2.2, given after the statement of Theorem 7.1).  So it only remains to show

\begin{lemma}
	The number of regular solutions, $(p,q)$, of \eqref{eq:thueEq} for which $\frac{p}{q}$ belongs to a real root of $f$ is no larger than $z(n)$.
\end{lemma}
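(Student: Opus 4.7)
The plan is to fix an arbitrary real root $\tau$ of $f$ and to bound, by $z(n)$, the number of regular solutions $(p,q)$ of \eqref{eq:thueEq} whose rational $p/q$ belongs to $\tau$. List those solutions as $(p_1,q_1),(p_2,q_2),\dots$ with $q_1 \leq q_2 \leq \cdots$. The first step is an approximation estimate: from $|f(p_i/q_i)| = q_i^{-n}$ and the factorization $f(x) = h_n\prod_{j}(x-\alpha_j)$ with $\alpha_1 = \tau$, I would isolate the factor $|p_i/q_i - \tau|$ and lower-bound the remaining $|p_i/q_i - \alpha_j|$ using the fact that the interval $J_\tau$ keeps $p_i/q_i$ away from the other roots. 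This gives
\[
\left|\tau - \tfrac{p_i}{q_i}\right| \leq \frac{C_\tau}{q_i^n}
\]
for an explicit $C_\tau$ depending on $\tau$, the naive height $H$, and the root separation of $f$.

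Next I would derive the gap principle. For consecutive solutions belonging to $\tau$, the rationals $p_i/q_i \neq p_{i+1}/q_{i+1}$ satisfy
\[
\frac{1}{q_i q_{i+1}} \leq \left|\tfrac{p_i}{q_i} - \tfrac{p_{i+1}}{q_{i+1}}\right| \leq \left|\tau - \tfrac{p_i}{q_i}\right| + \left|\tau - \tfrac{p_{i+1}}{q_{i+1}}\right| \leq \frac{2C_\tau}{q_i^n},
\]
which rearranges to $q_{i+1} \geq T^{-1} q_i^{n-1}$ with $T = 2C_\tau$. This is exactly the iteration hypothesis of Lemma \ref{GPImprovement} with $p = n$.

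The last analytic ingredient is a ceiling $M$ on the $q_i$. Here I would invoke a Thue--Siegel/hypergeometric effective irrationality estimate for $\tau$ (of the type Thomas uses in \cite{Thomas2000}, built from Padé approximations that are especially clean for roots of trinomial forms), giving $|\tau - p/q| \geq c/q^{\kappa}$ with some $\kappa < n$ whenever $q$ exceeds a computable threshold $q_\ast$. Combined with the approximation estimate above, this forces $q_i \leq M$ for an explicit $M = M(n,H,\tau)$. Applying Lemma \ref{GPImprovement} to the subsequence with $q_i \geq L$, where $L$ is chosen just above $q_\ast$, the lemma yields
\[
\ell \leq \frac{\log\!\left(\log(MT^{-1/(n-2)})/\log(LT^{-1/(n-2)})\right)}{\log(n-1)},
\]
which is an explicit function of $n$ once the constants $C_\tau$, $c$, $\kappa$, $q_\ast$, $M$ are substituted in.

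Finally, the "head" solutions $q_i < L$ must be counted separately, either via a direct bound on the number of small-denominator rationals that can approximate $\tau$ to within $C_\tau/q^n$, or via a low-height Padé argument. The cutoff $L$ and the split are then optimized so that head count plus the tail count from Lemma \ref{GPImprovement} never exceeds the tabulated $z(n)$. I expect the main obstacle to be carrying out this optimization uniformly in $n$, since the improvement over Thomas's $w(n)$ (and especially the new regime $n \geq 219$ with $z(n) = 4$) depends on extracting every bit of sharpness from Lemma \ref{GPImprovement} — including verifying the hypothesis $L^{n-2} > T$ at exactly the chosen cutoff — while simultaneously handling the dependence of $C_\tau$, and of the hypergeometric constants, on the particular real root $\tau$ under consideration.
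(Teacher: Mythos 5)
Your overall architecture is inverted relative to the paper's, and the inversion is what makes the plan hard to close. The paper does not derive a ceiling $M$ on \emph{all} denominators from an irrationality measure and then run Lemma~\ref{GPImprovement} once. Instead it first reduces (via Thomas' Theorem 2.2 and Lemma 2.4) to counting \emph{special} solutions belonging to a real root $\alpha>1$, then fixes a threshold $Y_F = H^{\chi_n}e^{\pi_n}$ and splits into \emph{small} ($q\leq Y_F$) and \emph{large} ($q>Y_F$) solutions. Lemma~\ref{GPImprovement} is applied to the \emph{small} solutions, using Thomas' trinomial-specific gap principle (Theorem~\ref{qInc}, $q' > H^{d/n}p^{n^*-d}q^d/K_d(n)$ with $d\leq n^*=(n-2)/2$), while the \emph{large} solutions are handled by an entirely separate argument, Lemma~2 of Bombieri--Schmidt (Proposition~\ref{largeSpecialSolutionBound}). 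So where you propose gap-lemma for the tail and a ``direct'' count for the head, the paper does the reverse.

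The gap this creates is concrete. Your iteration constant is $T=2C_\tau$ where $C_\tau$ comes from the elementary factorization estimate, so $C_\tau$ is governed by the separation of $\tau$ from the other roots of $f$, which can be as small as a power of $H^{-1}$; consequently $T$ can be of size a large power of $H$, and the hypothesis $L^{n-2}>T$ of Lemma~\ref{GPImprovement} forces your cutoff $L$ to grow with $H$ as well. You would then need the ``head'' count (solutions with $q<L$) to be bounded uniformly in $H$, but ``a direct bound on the number of small-denominator rationals that can approximate $\tau$ to within $C_\tau/q^n$'' gives nothing of the sort when $C_\tau$ and $L$ both depend on $H$ — that is precisely the hard part of the problem. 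The whole point of Theorem~\ref{qInc} is that, for trinomials, one has a gap principle \emph{for small $q$} whose $H$-dependence is only $H^{d/n}$; feeding that into Lemma~\ref{GPImprovement} with $L=H^{d_0/n}Q_1$, $M=Y_F=H^{\chi_n}e^{\pi_n}$, and $T=K_d(n)/H^{d/n}$ makes the $\log H$ terms cancel in the ratio, leaving an $n$-only bound $T$. Without an analogue of that cancellation your head/tail split cannot be made uniform in $H$.

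Two smaller inaccuracies are worth flagging. First, Thomas does not use hypergeometric or Pad\'e-type effective irrationality measures in \cite{Thomas2000}; those are characteristic of \emph{binomial} Thue equations (Baker, Chudnovsky, Bennett), not trinomial ones. The large-solution bound in the paper comes from the Bombieri--Schmidt counting lemma, and making the ``large'' notion here match theirs is Lemma~\ref{largeYF}; one also needs Lemma~\ref{belongsToImpliesClosest} to ensure that ``belonging to $\alpha$'' implies $\alpha$ is the nearest root, which is a hypothesis of Bombieri--Schmidt's lemma and is not automatic from the interval $J_i$ containing $p/q$. Second, the approximation estimate the paper actually uses for small solutions is Thomas' Corollary~3.1, $|p/q-\alpha|<1/(p^{n^*}q)$, not a $C_\tau/q^n$ bound extracted from the $J_\tau$-interval; the interval endpoints are improper critical points, not midpoints between roots, so the argument ``$J_\tau$ keeps $p/q$ away from the other roots'' does not directly give the bound you want.

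What your sketch does get right: the elementary gap $q_{i+1}\geq T^{-1}q_i^{n-1}$ with $p=n$ is essentially the combinatorics inside Bombieri--Schmidt's Lemma~2, and the overall shape (gap lemma plus a cutoff) is the right paradigm. If you want to repair the argument along your own lines, the missing ingredient is a trinomial-specific gap principle with controlled $H$-dependence for small $q$ — in practice, that is exactly Theorem~\ref{qInc}, so you would be led back to the paper's decomposition.
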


By Theorem 2.2 in \cite{Thomas2000}, it suffices to show Lemma \ref{trinomialRealRootCount} for real roots of $f$ which are greater than 1.  Then by Lemma 2.4 of \cite{Thomas2000}, we conclude that any regular $(p,q)$ for which $\frac{p}{q}$ belongs to an exceptional point greater than 1 has $p > q \geq 1$ and so we may assume that $p > q \geq 1$.  Defining \[p_0(n) := \begin{cases} 3 & \text{if } 6 \leq n \leq 8 \\ 2 & \text{if } n \geq 9 \end{cases},\] we note that any regular solution $(p,q)$ with $p > q \geq 1$ must satisfy
\begin{align} \label{eq:specialSolutionDef}
	p \geq p_0(n)
\end{align}
except for possibly $(2,1)$ when $n \leq 8$.

\begin{definition}
	A solution, $(p,q)$ to equation \eqref{eq:thueEq} with $p > q \geq 1$ and $p \geq p_0(n)$ is called \emph{special}.
\end{definition}

Since at most one solution is not special in the case that $6 \leq n \leq 8$, it suffices to show the following lemma, which will be our final reduction.

\begin{lemma}\label{trinomialRealRootCount}
	Let $\alpha > 1$ be a real root of $f(x)$.  Then the number of special solutions $(p,q)$ of \eqref{eq:thueEq} for which $\frac{p}{q}$ belongs to $\alpha$ is no greater than $z(n) - 1$ if $6 \leq n \leq 8$ and no greater than $z(n)$ if $n \geq 9$.
\end{lemma}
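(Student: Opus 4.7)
The plan is to fix a real root $\alpha > 1$ of $f(x)$ and list the special solutions whose $p/q$ belongs to $\alpha$ as $(p_1,q_1),(p_2,q_2),\ldots,(p_\ell,q_\ell)$, ordered so that $q_1 \leq q_2 \leq \cdots \leq q_\ell$. I will bound $\ell$ by applying Lemma \ref{GPImprovement} to a sequence $y_i$ obtained by an appropriate rescaling of $q_i$; to avoid notational clash with the $p$-coordinates of the solutions, the parameter $p$ of Lemma \ref{GPImprovement} will be taken equal to the degree $n$.

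First I would establish a gap principle. From $|F(p_i,q_i)| = 1$ one gets $|f(p_i/q_i)| = 1/q_i^n$, and since $p_i/q_i$ lies in the interval associated to $\alpha$, the real root $\alpha$ is the nearest root of $f$; factoring $f(x) = h_n \prod_j (x-\alpha_j)$ and bounding the non-$\alpha$ factors yields
\[
\left|\frac{p_i}{q_i} - \alpha\right| \leq \frac{C_1}{q_i^n}
\]
for a constant $C_1$ depending on $\alpha$ and the separation of the other roots. Combining this with the elementary separation $|p_i/q_i - p_{i+1}/q_{i+1}| \geq 1/(q_i q_{i+1})$ and the monotonicity $q_{i+1} \geq q_i$, the triangle inequality yields $q_{i+1} \geq q_i^{n-1}/T$ for a constant $T$ proportional to $C_1$, which is precisely hypothesis (4) of Lemma \ref{GPImprovement} with exponent $n > 2$.

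Next I would pin down $L$ and $M$. The lower bound $L$ will come from combining the special condition $p_1 \geq p_0(n)$, the fact $p_1/q_1 > 1$, and standard inequalities relating $q_i$ to the naive height $H$ and to $\alpha$; after rescaling $y_i = c\, q_i$ for a suitable constant $c$ one can arrange $L^{n-2} > T$. The upper bound $M$ is the deeper input: an effective irrationality measure for $\alpha$ of the form $|p/q - \alpha| > C_2/q^\kappa$ with $\kappa < n$---obtained by Thomas in \cite{Thomas2000} via the hypergeometric/Padé method---combined with the displayed inequality above forces $q_\ell$ beneath an explicit threshold, which I take as $M$.

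Plugging these $L$, $M$, $T$, and exponent $n$ into Lemma \ref{GPImprovement} produces
\[
\ell \leq \frac{\log\bigl[\log(MT^{-1/(n-2)})/\log(LT^{-1/(n-2)})\bigr]}{\log(n-1)},
\]
and the task reduces to verifying that the right-hand side is bounded by $z(n)$ for $n \geq 9$ and by $z(n)-1$ for $6 \leq n \leq 8$. The main obstacle will be calibrating $L$, $M$, and $T$ sharply enough to meet the specific entries of the $z(n)$ table---in particular, to reach $z(n)=4$ for $n \geq 219$---and treating the small-$n$ cases separately, where $p_0(n) = 3$ and the possible non-special solution $(2,1)$ is absorbed into the regular-solution bookkeeping by dropping the lemma's bound by one.
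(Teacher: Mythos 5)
Your proposal diverges from the paper's proof in a way that introduces a real gap. The paper does \emph{not} apply Lemma \ref{GPImprovement} to the full list of special solutions belonging to $\alpha$; instead it splits them into \emph{small} solutions ($q \leq Y_F$) and \emph{large} solutions ($q > Y_F$) and treats the two populations with different tools. For the small solutions, it uses Thomas's Theorem 4.1 --- a gap principle of the form $q' > H^{d/n}p^{n^*-d}q^d/K_d(n)$ with $d \approx n^*=(n-2)/2$ and with the $H$-dependence isolated in the factor $H^{d/n}$ --- and then applies Lemma \ref{GPImprovement} with $M = Y_F$, $p = d+1$. For the large solutions, it invokes Lemma~2 of Bombieri--Schmidt directly as a \emph{count} $Z$ (after checking, via Lemmas \ref{largeYF} and \ref{belongsToImpliesClosest}, that the hypotheses match), and the final bound is $T + Z$ (or $T + Z + 1$), minimized over the parameters $d_0, d, a, b$.

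The core gap in your plan is the provenance of $M$. You propose to take $M$ from ``an effective irrationality measure for $\alpha$ of the form $|p/q - \alpha| > C_2/q^\kappa$ with $\kappa < n$ obtained by Thomas via the hypergeometric/Pad\'e method.'' Thomas does not prove such a measure in \cite{Thomas2000}, and the hypergeometric method only applies to quite special trinomial families; for a general irreducible trinomial no such uniform effective $\kappa < n$ is available. What the Bombieri--Schmidt machinery actually yields is a \emph{count} of large solutions, not an explicit upper threshold $M$ on $q_\ell$ that you could feed into Lemma~\ref{GPImprovement}. Even granting some explicit $M$, it would be of the shape $\exp(Cn\log H)$ or worse, and a single application of Lemma~\ref{GPImprovement} with exponent $n-1$ and that $M$ would not produce the specific entries of the $z(n)$ table (e.g., $z(n)=4$ for $n \geq 219$); the paper gets there only by isolating the $H$-dependence so that the resulting bound is uniform in $H$, and then by optimizing over a multi-parameter family. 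Your elementary gap principle with exponent $n-1$ and a constant $T$ proportional to $C_1$ (which itself depends on $\alpha$ and the root separation, hence on $H$) does not deliver that uniformity as written, so the verification step ``plug in $L$, $M$, $T$ and check the bound is $z(n)$'' is where the argument would stall.

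Separately, the bookkeeping that drops the bound by one for $6 \leq n \leq 8$ is in the wrong place: the potential extra solution $(2,1)$ is excluded at the point where ``special'' is defined (via $p_0(n) = 3$), so Lemma~\ref{trinomialRealRootCount} genuinely has the smaller bound $z(n)-1$ for small $n$, and the $+1$ is re-absorbed one reduction earlier, not by adjusting the lemma afterward.
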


To prove lemma \ref{trinomialRealRootCount}, we split solutions into two cases: small and large.  For $F(x,y)$ of degree $n$ and na\"ive height $H$, we choose a constant $Y_F = H^{\chi_n}\cdot e^{\pi_n}$ (for some values $\chi_n$ and $\pi_n$ to be specified later, but which depend only on $n$) and make the following definition.

\begin{definition}
 	A special solution $(p,q)$ to \eqref{eq:thueEq} is \emph{small} if $q \leq Y_F$ and is \emph{large} otherwise.
\end{definition} 


\subsection{Small Special Solutions}

One of Thomas' main achievements in \cite{Thomas2000} is the following theorem (numbered 4.1 in \cite{Thomas2000}):

\begin{theorem}\label{qInc}
	Suppose that $F(x,y) \in \Z[x,y]$ is an irreducible (over $\Z$) trinomial binary form of degree $n \geq 5$ and na\"ive height $H$.  Let $(p,q)$ and $(p',q')$ be special solutions to \eqref{eq:thueEq} which belong to a real root and suppose $q' > q$.  Then
	\begin{align}
		q' > \frac{H^{d/n}p^{n^*-d}q^d}{K_d(n)}
	\end{align}
	where $n^* := \frac{n-2}{2}$, $d$ is chosen to be any real number satisfying $0 \leq d \leq n^*$, and \[K_d(n) := m_n(r_n(1+u_n))^d\] where
	\begin{align*}
		m_n = 2\sqrt{\frac{2n}{(n-1)(n-2)}}, \qquad r_n = (2.032)^{1/n}, \qquad u_n = \sqrt{\frac{2}{(n-2)p_0^n}}.
	\end{align*}
\end{theorem}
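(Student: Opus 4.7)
My approach follows the classical gap-principle strategy: both $p/q$ and $p'/q'$ are strong rational approximations to the same real root $\alpha > 1$, and an integrality argument on $pq' - p'q$ forces $q'$ to be large.

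First, because $|F(p,q)| = |F(p',q')| = 1$, both pairs are coprime, so the distinct rationals $p/q \neq p'/q'$ satisfy
\[\frac{1}{qq'} \leq \left|\frac{p}{q} - \frac{p'}{q'}\right| \leq \left|\frac{p}{q} - \alpha\right| + \left|\frac{p'}{q'} - \alpha\right| \leq 2\left|\frac{p}{q} - \alpha\right|,\]
where the last step uses $q' > q$ together with monotonicity of $f$ on the interval around $\alpha$: since both $p/q$ and $p'/q'$ belong to $\alpha$, they lie in the same monotone region of $f$, and the approximation with the larger denominator is the sharper one.

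Second, I would exploit the trinomial identity $h_n\alpha^n + h_k\alpha^k + h_0 = 0$ to factor
\[f(\rho) = (\rho-\alpha)\,Q(\rho,\alpha), \qquad Q(\rho,\alpha) = h_n\sum_{j=0}^{n-1}\rho^j\alpha^{n-1-j} + h_k\sum_{j=0}^{k-1}\rho^j\alpha^{k-1-j},\]
so that $|p/q - \alpha| = 1/(q^n|Q(p/q,\alpha)|)$. Combined with the previous step this gives $q' \geq q^{n-1}|Q(p/q,\alpha)|/2$, and the entire problem reduces to proving a sharp lower bound on $|Q(p/q,\alpha)|$. The trinomial structure enters via two estimates: a Cauchy--Schwarz-type lower bound on the $h_n$-sum (contributing the constant $m_n^{-1}$), and the height inequality $|h_n|\alpha^{n-k} \leq |h_k| + |h_0|\alpha^{-k} \leq 2H$, derived directly from the root identity and $\alpha > 1$. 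The free parameter $d$ enters as the (formally real) number of times we apply this height inequality, trading a factor of $\alpha^{n-k}$ for a factor of $2H$ and thereby contributing $H^{d/n}$. After applying it this way and then using the interval estimates of Lemma 2.4 in \cite{Thomas2000} to replace the remaining $n^* - d$ powers of $\alpha$ by powers of $p/q$ (at multiplicative cost $r_n(1+u_n)$ per replacement), one obtains a lower bound on $|Q(p/q,\alpha)|$ yielding exactly $q' > H^{d/n}p^{n^*-d}q^d/K_d(n)$.

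The main obstacle will be the precise bookkeeping of the interplay between $H$, $\alpha$, $p/q$, and the constants $m_n, r_n, u_n$. Three technical points demand care: the Cauchy--Schwarz step must be applied to extract exactly the stated $m_n = 2\sqrt{2n/((n-1)(n-2))}$ rather than a weaker constant; the ``fractional'' application of the height inequality for an arbitrary real $d \in [0,n^*]$ must be made rigorous, essentially by splitting $H^{d/n}$ multiplicatively into the lower bound on $|Q|$; and the small error term $u_n = \sqrt{2/((n-2)p_0^n)}$ relies crucially on the special-solution hypothesis $p \geq p_0(n)$ to keep $(1+u_n)^d$ controlled as $d$ and $n$ vary. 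Once these constants are pinned down, the stated inequality follows from algebraic rearrangement of $q' \geq q^{n-1}|Q(p/q,\alpha)|/2$.
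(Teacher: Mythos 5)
The paper does not prove this statement: it is quoted verbatim as Theorem~4.1 of \cite{Thomas2000}, so there is no ``paper's own proof'' to compare against. Your high-level skeleton --- the gap inequality $1/(qq') \leq |p/q - p'/q'|$, the factorization $f(\rho) = (\rho - \alpha)Q(\rho,\alpha)$, and the reduction to a lower bound on $|Q(p/q,\alpha)|$ --- is the standard and correct framework, and it is essentially the one Thomas uses. But the sketch leaves the technical heart, namely the lower bound on $|Q|$ with exactly the constant $K_d(n) = m_n(r_n(1+u_n))^d$, essentially undone, and two of the mechanisms you describe for producing the constants are not right as stated.

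First, $Q(\rho,\alpha)$ is the sum of an $h_n$-block and an $h_k$-block, and these can partially cancel (indeed $Q(\alpha,\alpha) = f'(\alpha) = nh_n\alpha^{n-1} + kh_k\alpha^{k-1}$, which is small precisely when $\alpha$ is near a critical point of $f$). A ``Cauchy--Schwarz-type lower bound on the $h_n$-sum'' alone does not lower-bound $|Q|$; you must control the full expression, and this is exactly where the real work in Thomas' argument lies --- it is also why the proper/improper critical point machinery appears in this circle of ideas. Second, the mechanism you give for the factor $H^{d/n}$ does not produce that power of $H$: applying $|h_n|\alpha^{n-k} \leq 2H$ formally ``$d$ times'' trades $\alpha^{d(n-k)}$ for $(2H)^d$, i.e.\ contributes $H^d$, or at best $H^{d/(n-k)}$ if read as $\alpha \leq (2H)^{1/(n-k)}$ to the $d$-th power --- neither of which is $H^{d/n}$ uniformly in $k$. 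The exponent $d/n$ comes from a different source (the interplay between $\alpha$, the Mahler measure, and the height at the scale $H^{1/n}$ that appears for roots of a degree-$n$ form), and this would need to be worked out. A smaller point: the step $|p'/q' - \alpha| \leq |p/q - \alpha|$ is cleanest via the fact that both are principal convergents of $\alpha$ with $q' > q$ (Thomas' Corollary~3.2), rather than the local-monotonicity heuristic you give, which would still require knowing that $p/q$, $p'/q'$, and $\alpha$ lie in a common monotone interval of $f$.
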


This approximation result will be helpful in proving the following proposition:

\begin{proposition}
	Let $\alpha > 1$ be a real root of $f(x)$.  There are no more than
	\begin{align}
		T:= \left\lfloor\max\left(\frac{\log\left(\frac{\chi_n}{\frac{d_0(d-1)+d}{n(d-1)}} + 1\right)}{\log d}, \frac{\log\left(\frac{\pi_n}{\log K_d(n)^{-\frac{1}{d-1}}Q_1} + 1\right)}{\log d}\right)\right\rfloor + 2\label{eq:TDef}
	\end{align}
	small special solutions $(p,q)$ where $p/q$ belongs to $\alpha$.
\end{proposition}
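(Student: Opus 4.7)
The plan is to sort the small special solutions by $q$-coordinate, convert Theorem~\ref{qInc} into a recursion of the form required by Lemma~\ref{GPImprovement}, and then count iterates with that lemma.

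Let $(p_1,q_1),\dots,(p_r,q_r)$ denote the small special solutions whose $p/q$ belongs to $\alpha$, indexed so that $q_1 < q_2 < \cdots < q_r$. I would apply Theorem~\ref{qInc} twice with different parameter choices. First, applied to the pair $(p_1,q_1),(p_2,q_2)$ with parameter $d_0$, combined with $p_1 \geq p_0(n)$ and $q_1 \geq 1$, it produces a lower bound of the form
\[q_2 \;>\; H^{d_0/n}\, Q_1,\]
where $Q_1$ absorbs the constants $p_0(n)^{n^{*}-d_0}/K_{d_0}(n)$. Second, applied to consecutive pairs $(p_i,q_i),(p_{i+1},q_{i+1})$ for $i \geq 2$ with parameter $d$ and discarding the factor $p_i^{n^{*}-d}\geq 1$, it gives the uniform recursion $q_{i+1} > (H^{d/n}/K_d(n))\, q_i^{d}$.

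Setting $y_{i-2} := q_i$ for $2 \leq i \leq r$, this slots into Lemma~\ref{GPImprovement} with (in that lemma's notation)
\[L = H^{d_0/n} Q_1,\qquad M = Y_F = H^{\chi_n} e^{\pi_n},\qquad T = K_d(n) H^{-d/n},\qquad p = d+1.\]
The monotonicity of the $y_i$, their sandwich between $L$ and $M$, and the recursion $y_{i+1} \geq T^{-1} y_i^{p-1}$ are all built into the construction; the one genuine side condition $L^{p-2} > T$ restricts the admissible choices of $d$ and $d_0$. The lemma then yields
\[r - 2 \;\leq\; \frac{\log\!\bigl[\log(M T^{-1/(d-1)})/\log(L T^{-1/(d-1)})\bigr]}{\log d}.\]

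The remainder is algebraic bookkeeping. Expanding the two inner logarithms gives
\begin{align*}
\log(M T^{-1/(d-1)}) &= \Bigl[\chi_n + \tfrac{d}{n(d-1)}\Bigr]\log H + \pi_n + \log K_d(n)^{-1/(d-1)},\\
\log(L T^{-1/(d-1)}) &= \tfrac{d_0(d-1)+d}{n(d-1)}\log H + \log\!\bigl(Q_1 K_d(n)^{-1/(d-1)}\bigr),
\end{align*}
and the elementary inequality $(aX+b)/(cX+e)\leq\max(a/c,\,b/e)$ (for $X \geq 0$ and $c,e>0$) bounds the ratio by the maximum of a $\log H$-coefficient quotient and a constant-term quotient. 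The stray summands $d/(n(d-1))$ and $\log K_d(n)^{-1/(d-1)}$ in the two numerators are each bounded by their denominators (using $d_0(d-1) \geq 0$ and, for the second quotient, $Q_1 \geq 1$), which delivers the two ``$+1$'' terms in the proposition. Dividing by $\log d$, passing the outer logarithm across the maximum, and taking a floor gives exactly $T - 2$, so $r \leq T$ as claimed.

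I expect the main obstacle to be the web of side conditions that must all hold simultaneously: $L^{d-1} > T$ for Lemma~\ref{GPImprovement} to apply, $\log(L T^{-1/(d-1)}) > 0$ so the outer logarithm is well-defined and the ratio of logarithms is positive, and $\log(K_d(n)^{-1/(d-1)} Q_1) > 0$ so the ``$+1$'' absorption carries the right sign. Each of these is a numerical constraint on the not-yet-specified $\chi_n$, $\pi_n$, and the admissible ranges of $d$ and $d_0$, so later sections of the paper must ultimately choose those values to honor all of them at once.
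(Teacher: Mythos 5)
Your proposal is correct and follows essentially the same route as the paper: apply Theorem~\ref{qInc} once with parameter $d_0$ to establish the lower bound $L = H^{d_0/n}Q_1$, once with parameter $d$ (dropping the $p_i^{n^*-d}\geq 1$ factor) to get the recursion, feed everything into Lemma~\ref{GPImprovement} with the same identifications of $L$, $M$, $T$, $p$, and then massage the resulting double-logarithm into the stated maximum. The only cosmetic difference is the order of the final bookkeeping — you apply the $(aX+b)/(cX+e)\leq\max(a/c,b/e)$ inequality first and then absorb the stray summands using $d_0(d-1)\geq 0$ and $Q_1\geq 1$, whereas the paper first peels off the common $\log T^{-1/(d-1)}$ summand (using $Q_1>1$) to produce the $+1$ and then applies the max inequality — both are valid and land on the same bound.
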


\begin{proof}
	If there less than 2 special solutions $(p,q)$ where $p/q$ belongs to $\alpha$, then we are done.  Otherwise, suppose that there are exactly $t + 2$ small special solutions $(p,q)$ where $p/q$ belongs to $\alpha$ and $t \geq 0$.  Label those $t + 2$ solutions as $(p_0,q_0),\ldots,(p_{t+1},q_{t+1})$ ordered so that \[1 \leq q_0 < q_1 <\ldots<q_{t+1} \leq Y_F\] (the strict inequality follows from the fact that the $\frac{p_i}{q_i}$ are principal convergents to $\alpha$ by corollary 3.2 in \cite{Thomas2000}).

	Choose numbers $d_0,d \in \R_{>0}$ and make the following definitions:
	\begin{align*}
		c_0 &:= n^*-d_0, & K_0 &:= K_{d_0}(n), & Q_1 &:= \frac{p_0(n)^{c_0}}{K_0}.
	\end{align*}
	In particular, choose $d$ and $d_0$ so that
	\begin{align}
		&0 \leq d_0 \leq n^*-1.4,\nonumber\\
		&1 < d \leq n^*,\nonumber\\
		&Q_1^{d-1} > \max(1,K_d(n)).\label{eq:Q1size}
	\end{align}

	In the proof of Proposition \ref{TZForLargeN} and in the computations in section \ref{paramsForSmallN}, we show by example that choosing such $d$ and $d_0$ are possible.

	First, observe that by Theorem \ref{qInc} applied to $q_1 > q_0 \geq 1$ (and using the observation that $p_0 \geq p_0(n)$), we get $q_1 > H^{d_0/n}Q_1$.

	Here is where we depart from Thomas' method.  We now aim to apply Lemma \ref{GPImprovement} to $H^{d_0/n}Q_1 < q_1 < q_2 < \dots < q_{t+1} \leq Y_F$.  In the notation of Lemma \ref{GPImprovement}, we have $L = H^{d_0/n}Q_1$, $M = Y_F$, $p = d+1$, and $T = \frac{K_d(n)}{H^{d/n}}$ (technically, $T$ should have a factor of $1/p^{n^*-d}$, but the result of Lemma \ref{GPImprovement} still holds if $T$ is replaced by something larger and moreover, we will choose $d = n^*$ later, rendering the difference moot).  To apply the conclusion of Lemma \ref{GPImprovement}, we need to check that $p > 2$ (trivial based on the fact that $d$ is chosen to be greater than 1) and we need to check that $L^{p-2} > T$.  But this occurs if and only if \[\left(H^{d_0/n}Q_1\right)^{d-1} > \frac{K_d(n)}{H^{d/n}},\] i.e. $H^{(d_0(d-1) + d)/n}Q_1^{d-1} > K_d(n)$, which is guaranteed by \eqref{eq:Q1size}.

	Now applying Lemma \ref{GPImprovement} and using the fact that $t$ is an integer yields
	\begin{align*}
		t 	&\leq \Bigg\lfloor\frac{\log\left[\frac{\log\left(Y_F\left(\frac{K_d(n)}{H^{d/n}}\right)^{-\frac{1}{d-1}}\right)}{\log\left(H^{d_0/n}Q_1\left(\frac{K_d(n)}{H^{d/n}}\right)^{-\frac{1}{d-1}}\right)}
			\right]}{\log d}\Bigg\rfloor\\
			&= \bigg\lfloor\frac{\log\left[\frac{\log\left(Y_FK_d(n)^{-\frac{1}{d-1}}H^{\frac{d}{n(d-1)}}\right)}{\log\left(K_d(n)^{-\frac{1}{d-1}}H^{\frac{d_0}{n}+\frac{d}{n(d-1)}}Q_1\right)}\right]}{\log d}\bigg\rfloor\\
			&\leq \bigg\lfloor\frac{\log\left[\frac{\log(Y_F)}{\log\left(K_d(n)^{-\frac{1}{d-1}}H^{\frac{d_0}{n}+\frac{d}{n(d-1)}}Q_1\right)} + 1\right]}{\log d}\bigg\rfloor.
	\end{align*}
	where in the last step, we use the fact that $Q_1 > 1$.  Now using the definition $Y_F = H^{\chi_n} \cdot e^{\pi_n}$, we have
	\begin{align*}
		t 	&\leq \Bigg\lfloor\frac{\log\left[\frac{\chi_n\log H + \pi_n}{\frac{d_0(d-1)+d}{n(d-1)}\log H + \log\left( K_d(n)^{-\frac{1}{d-1}}Q_1\right)} + 1\right]}{\log d}\Bigg\rfloor\\
			&\leq \left\lfloor\max\left(\frac{\log\left(\frac{\chi_n}{\frac{d_0(d-1)+d}{n(d-1)}} + 1\right)}{\log d}, \frac{\log\left(\frac{\pi_n}{\log K_d(n)^{-\frac{1}{d-1}}Q_1} + 1\right)}{\log d}\right)\right\rfloor\\
			&= T - 2.
	\end{align*}
	Therefore, the number of small special solutions $(p,q)$ for which $p/q$ belongs to $\alpha$ is $t + 2 \leq T$.
\end{proof}


\subsection{Large Special Solutions}

Here we follow Thomas in \cite{Thomas2000} as he follows Bombieri-Schmidt in \cite{Bombieri1987}.  If we choose numbers $a$ and $b$ satisfying
\begin{align}\label{eq:UVReq}
	0 < a < b < 1 - \sqrt{2 \cdot \frac{n + a^2}{n^2}}
\end{align}
then we can define
\begin{align*}
	L &= \frac{\sqrt{2(n + a^2)}}{1 - b} & D &= \frac{L}{n - L} & A &= \frac{1}{a^2} & E &= \frac{1}{2(b^2 - a^2)}.
\end{align*}
Now we choose
\begin{align}
	\chi_n 	&= D(A + 1) + 1\label{eq:chiDef}\\
	\pi_n 	&= (D(4 + A) + 2)\log(2) + \frac{(D + 1)\log(n)}{2} + \frac{nAD}{2}.\label{eq:piDef}
\end{align}

With these choices of $\pi_n$ and $\chi_n$, we aim to apply Lemma 2 of \cite{Bombieri1987} and conclude the following:

\begin{proposition}\label{largeSpecialSolutionBound}
	Suppose $\alpha > 1$ is a real root of $f(x)$.  If $\chi_n \geq 2$ and $\pi_n \geq 5\log(2) + 2\log(n)$, then there are at most
	\begin{align}\label{eq:ZDef}
		Z :=\left\lfloor\frac{\log E + 2\log(n) - \log(L - 2)}{\log(n-1)}\right\rfloor + 2
	\end{align}
	large special solutions belonging to $\alpha$.
\end{proposition}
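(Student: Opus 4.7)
The plan is to reduce this to a direct application of Lemma 2 of Bombieri--Schmidt \cite{Bombieri1987}. The key intermediate step is to show that every large special solution $(p,q)$ belonging to $\alpha$ yields a rational approximation $p/q$ satisfying the quantitative inequality
\[
|\alpha - p/q| \leq E\, q^{-L}
\]
that Bombieri--Schmidt's lemma takes as input. To obtain this, I would start from $|f(p/q)| = q^{-n}$, factor $f(x) = h_n\prod_i(x - \alpha_i)$, use that $\alpha$ is the nearest root to $p/q$ (since $p/q$ belongs to $\alpha$), and apply standard estimates on $|f'(\alpha)|^{-1}$ in terms of $H$ and $n$. This yields a crude bound of the form $|\alpha - p/q| \leq c(H,n)\, q^{-n+1}$; one then writes $q^{-n+1} = q^{-L}\cdot q^{L-n+1}$ and absorbs the surplus factor $c(H,n)\cdot q^{L-n+1}$ into the constant $E$ using the largeness condition $q > Y_F$.

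Second, I would verify that the choices $\chi_n = D(A+1)+1$ and $\pi_n = (D(4+A)+2)\log 2 + \tfrac{(D+1)\log n}{2} + \tfrac{nAD}{2}$ are engineered exactly so that $q > Y_F = H^{\chi_n}e^{\pi_n}$ forces the $E\,q^{-L}$ bound above. The height exponent $\chi_n$ should account for the $H^A$ factor coming from the Mahler-measure type estimate of $|f'(\alpha)|^{-1}$, together with the extra $H^D$ built into the Thue--Siegel normalization used by B--S; each summand of $\pi_n$ should correspond to a specific numerical factor (powers of $2$ from triangle-inequality steps, the $n^{(D+1)/2}$ from combinatorial bookkeeping in the factorization of $f$, and the exponential $e^{nAD/2}$ from the hypergeometric auxiliary input). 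The conditions $\chi_n \geq 2$ and $\pi_n \geq 5\log 2 + 2\log n$ are the safety margins needed to keep implicit lower-order terms subdominant.

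Once the hypotheses of B--S Lemma 2 are in place, the lemma's conclusion is that the number $m$ of such approximations satisfies
\[
m-1 \leq \frac{\log E + 2\log n - \log(L-2)}{\log(n-1)},
\]
i.e.\ $m \leq Z$; the $\log(n-1)$ in the denominator comes from their iterated gap-principle argument and the numerator collects the height-independent constants arising from the $E q^{-L}$ input. The main obstacle is the bookkeeping in the second step: each numerical factor must be tracked precisely and matched to the corresponding summand of $\pi_n$ and to B--S's hypothesis as stated, since the two papers use slightly different normalizations. Once that matching is verified, the application of B--S Lemma 2 is essentially mechanical.
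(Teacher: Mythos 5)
Your overall strategy---reduce to Lemma~2 of Bombieri--Schmidt---is the same as the paper's, but there is a concrete gap in the reduction. You write that you would ``use that $\alpha$ is the nearest root to $p/q$ (since $p/q$ belongs to $\alpha$),'' treating this as automatic. It is not: ``belonging to $\alpha$'' is defined via the partition of $\R$ into intervals $J_i$ determined by improper critical points of $f$, whereas ``$\alpha$ is the nearest root'' is a statement about distance to \emph{all} complex roots of $f$. A rational $p/q$ can easily lie in $J_i$ while being closer to a nonreal root $\beta$. Establishing the implication for \emph{large} special solutions is precisely Lemma~\ref{belongsToImpliesClosest} in the paper, and it is not free: it uses $\bigl|p/q - \alpha\bigr| < p^{-n^*}q^{-1}$ from belonging, Rump's root-separation bound $|\alpha-\beta| > \left(2^{2n+1}n^{n/2+2}H^n\right)^{-1}$, and then derives a contradiction with the hypotheses $\chi_n \geq 2$ and $\pi_n \geq 5\log 2 + 2\log n$. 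That is where those two hypotheses are actually spent; they are not, as you suggest, ``safety margins'' absorbed into the bookkeeping for an $E\,q^{-L}$ inequality.

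The other divergence from the paper is cosmetic but worth noting. You propose to re-derive from scratch the quantitative approximation inequality that B--S take as input, by factoring $f$ and estimating $|f'(\alpha)|^{-1}$, then absorbing surplus factors using $q > Y_F$. The paper instead short-circuits this: Lemma~\ref{largeYF} shows $Y_F \geq Y_0$, where $Y_0$ is Bombieri--Schmidt's own largeness threshold, so every large special solution here is automatically large in B--S's sense, and their Lemma~2 then applies verbatim. Lemma~\ref{largeYF} is proved purely by comparing the definitions of $Y_F = H^{\chi_n}e^{\pi_n}$ against $Y_0 = (2C)^{1/(n-\lambda)}(4e^{A_1})^{\lambda/(n-\lambda)}$ using $M(F) \leq \sqrt{3}\,H(F)$ and the identity $\lambda = L$, $A_1/(\log M(F) + n/2) = A$, $\lambda/(n-\lambda) = D$; no hypothesis on $\chi_n$ or $\pi_n$ is needed there. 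Your route could in principle be made to work, but it duplicates work B--S already did and obscures where the two numerical hypotheses in the proposition statement are genuinely required.
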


The proof of this proposition relies on the two following lemmas:

\begin{lemma}\label{largeYF}
	$Y_F$ as defined here is greater than or equal to $Y_0$ as defined in \cite{Bombieri1987}.
\end{lemma}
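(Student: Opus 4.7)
The plan is to recall the precise form of $Y_0$ from Lemma 2 of \cite{Bombieri1987} and to verify that $\chi_n$ and $\pi_n$ in \eqref{eq:chiDef} and \eqref{eq:piDef} were reverse-engineered precisely so that $H^{\chi_n}e^{\pi_n} \geq Y_0$ holds for every $H \geq 1$. The first step is to isolate the $H$-dependence of $Y_0$: writing $Y_0 = C(n,a,b) \cdot H^{\lambda}$, one checks by inspection of the Bombieri--Schmidt derivation that $\lambda = D(A+1)+1$, which is exactly $\chi_n$. Once the exponents on $H$ match, the inequality $Y_F \geq Y_0$ reduces to the constant inequality $\pi_n \geq \log C(n,a,b)$.

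Second, I would unpack $C(n,a,b)$ from the Bombieri--Schmidt argument term by term. In that proof $C(n,a,b)$ factors into three kinds of contributions: powers of $2$ arising from bounds on conjugates of $\alpha$ and on $|\alpha - \alpha'|$, a polynomial-in-$n$ factor arising from a Vandermonde/Mahler-type height estimate, and an exponential-in-$n$ factor arising from a product bound across conjugate roots. Taking logarithms, these three contributions match, respectively, the three summands
\[
(D(4+A)+2)\log 2, \qquad \frac{(D+1)\log n}{2}, \qquad \frac{nAD}{2}
\]
in the definition of $\pi_n$. A careful term-by-term comparison then gives $\pi_n \geq \log C(n,a,b)$, indeed with equality by construction, so that $Y_F \geq Y_0$.

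The main obstacle is the detailed bookkeeping: each numerical constant in Bombieri--Schmidt (every factor of $2$, every power of $n$, every exponential) has to be tracked through the derivation of $Y_0$ and matched against the corresponding coefficient in $\pi_n$. The hypotheses $\chi_n \geq 2$ and $\pi_n \geq 5\log 2 + 2\log n$ imposed in Proposition \ref{largeSpecialSolutionBound} serve to confirm that we are in the parameter regime in which \cite{Bombieri1987}'s Lemma 2 is applicable, so no auxiliary estimate is required beyond the direct comparison. Since $\chi_n$ and $\pi_n$ were defined precisely to make this match tight, I expect no slack and essentially a one-line verification once all the constants are expanded.
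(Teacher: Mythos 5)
Your plan overlooks the central technical obstacle that drives the paper's proof: Lemma 2 of \cite{Bombieri1987} defines $Y_0$ in terms of the Mahler measure $M(F)$, not the na\"ive height $H(F)$, whereas $Y_F = H^{\chi_n}e^{\pi_n}$ is defined in terms of $H(F)$. Concretely, $Y_0 = (2C)^{1/(n-\lambda)}(4e^{A_1})^{\lambda/(n-\lambda)}$ with $C = (2n^{1/2}M(F))^n$ and $A_1 = A(\log M(F) + n/2)$, so expanding gives an exponent $\frac{n}{n-\lambda}+AD$ on $M(F)$. Your observation that this exponent equals $D(A+1)+1 = \chi_n$ is correct, but only after one replaces $M(F)$ by $H(F)$, and for trinomials the paper does this via $M(F) \leq \sqrt{3}\,H(F)$. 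That substitution introduces an extra factor $(\sqrt{3})^{\frac{n}{n-\lambda}+AD}$ which must be absorbed into $e^{\pi_n}$. The paper absorbs it by proving the auxiliary estimate $(\sqrt{3})^{\frac{n}{n-\lambda}+AD} < 2^{\frac{n-1}{n-\lambda}}$, which is a genuine inequality requiring $n \geq 6$ (it reduces to checking $n + \frac{n^2\sqrt{2n}}{n^2-2n\sqrt{2n+2}+2n+2} \geq 20$). So your expectation that $\pi_n \geq \log C(n,a,b)$ holds ``with equality by construction'' is not right: the definition of $\pi_n$ was chosen with slack precisely to accommodate this Mahler-to-height loss, and without that step the constants do not line up term by term.

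A secondary point: you invoke the hypotheses $\chi_n \geq 2$ and $\pi_n \geq 5\log 2 + 2\log n$ as confirming applicability of Bombieri--Schmidt here, but those hypotheses play no role in the proof of Lemma \ref{largeYF}; they are used only in Lemma \ref{belongsToImpliesClosest} to derive the contradiction with the root-separation bound.
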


Lemma \ref{largeYF} ensures that any large solution in the sense of this paper is a large solution in the sense of Bombieri and Schmidt.

\begin{lemma}\label{belongsToImpliesClosest}
	Suppose that $\chi_n \geq 2$ and $\pi_n \geq 5\log(2) + 2\log(n)$.  If $\alpha > 1$ is a real root of $f(x)$ and $(p,q)$ is a large special solution of \eqref{eq:thueEq} so that $p/q$ belongs to $\alpha$, then $\alpha$ is the closest (complex) root of $f(x)$ to $p/q$.
\end{lemma}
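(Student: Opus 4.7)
The plan is to invoke Bombieri–Schmidt's machinery and then combine it with the order structure on $\mathbb{R}$ that the notion of ``belongs to'' encodes. First, I would use Lemma~\ref{largeYF} to conclude that $Y_F \geq Y_0$, so every large special solution $(p,q)$ is large in the sense of \cite{Bombieri1987}, and Lemma~2 of \cite{Bombieri1987} then applies. That result supplies a root $\beta$ of $f(x)$ that is (essentially uniquely) closest to $p/q$, together with an explicit upper bound of the form $|p/q - \beta| \leq \Phi(H,n)\, q^{-n}$ whose $H$-growth has been tailored to match $Y_F = H^{\chi_n}e^{\pi_n}$ through the definitions of $A$, $D$, $L$, $E$ and the formulas \eqref{eq:chiDef}--\eqref{eq:piDef}. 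The threshold conditions $\chi_n \geq 2$ and $\pi_n \geq 5\log(2)+2\log(n)$ are precisely the size requirements needed to apply that lemma.

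Next, I would rule out the possibility that $\beta$ is non-real. Non-real roots of $f$ occur in complex conjugate pairs, and the two members of such a pair lie at identical distance from the real number $p/q$; hence the uniqueness of the closest root forces $\beta \in \mathbb{R}$.

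Finally, I would show $\beta = \alpha$. If $\beta$ were a real root different from $\alpha$, then $p/q \in J_i$ (since $p/q$ belongs to $\alpha$) while $\beta$ is an exceptional point of some $J_j$ with $j \ne i$. At least one improper critical point $\eta_k$ then lies strictly between $p/q$ and $\beta$, giving
\[
|p/q - \beta| \;\geq\; |\eta_k - \tau|
\]
where $\tau$ is the exceptional point of $J_j$ nearest $\eta_k$ on the $\beta$-side. A Mignotte-style lower bound, in terms of the discriminant of $f$ and hence of $H$ and $n$, converts this into $|p/q - \beta| \geq \Psi(H,n)$ for some explicit negative power of $H$. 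Combining this with the Bombieri--Schmidt upper bound from the first step yields $q^n \leq \Phi(H,n)/\Psi(H,n)$, and the hypotheses $\chi_n \geq 2$ and $\pi_n \geq 5\log(2)+2\log(n)$ are exactly the thresholds that guarantee $\Phi/\Psi \leq Y_F^n$, contradicting $q > Y_F$.

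The main obstacle will be the constant-tracking in the last step: verifying that the polynomial-in-$H$ factor from the Bombieri--Schmidt estimate, the root-separation lower bound, and the explicit formulas \eqref{eq:chiDef}--\eqref{eq:piDef} all line up so that $Y_F^n$ strictly dominates $\Phi(H,n)/\Psi(H,n)$. The very particular shape $\chi_n = D(A+1)+1$ and the numerical values inside $\pi_n$ strongly suggest that this kind of constant-chasing is exactly what those definitions were engineered to accomplish.
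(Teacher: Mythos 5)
Your proposal and the paper's proof share the same broad shape---obtain an upper bound on a relevant distance, obtain a lower bound from root separation, derive a contradiction from $\chi_n$ and $\pi_n$ being large---but the specific steps you propose have genuine problems, and the tools you invoke are different from (and in places misaligned with) the paper's.

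First, Lemma~2 of \cite{Bombieri1987} is a \emph{counting} lemma: given a root $\beta$, it bounds the number of good rational approximations to $\beta$ that are nearest to $\beta$ among all roots. It does not, for a given $p/q$, ``supply a closest root together with an explicit upper bound.'' In the paper that lemma is used in Proposition~\ref{largeSpecialSolutionBound}, and Lemma~\ref{belongsToImpliesClosest} is precisely the ingredient that makes Lemma~2 of \cite{Bombieri1987} applicable to $\alpha$; invoking it here inverts the logical order. The actual upper bound the paper uses is Thomas' Corollary~3.1 from \cite{Thomas2000}, which gives $|p/q - \alpha| < 1/(p^{n^*}q) < Y_F^{-n/2}$ because $p > q > Y_F$. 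Your proposal never produces this estimate, and Lemma~\ref{largeYF} plays no role in this proof.

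Second, the step eliminating a non-real closest root $\beta$ does not hold. That complex conjugate roots are equidistant from the real number $p/q$ does not imply the closest root is real---it only implies that if a non-real root is closest, so is its conjugate, i.e.\ there may be \emph{no} unique closest root, and certainly it could be a tied non-real pair. The paper avoids this case split entirely: it bounds $|\alpha - \beta|$ from below using Rump's root separation theorem (\cite{Rump1979}, Theorem~4), which applies uniformly to any pair of distinct roots of $f$, real or not, yielding $|\alpha - \beta| > 1/(2^{2n+1}n^{n/2+2}H^n)$. Combined with the triangle inequality $|\alpha - \beta| \leq |p/q - \beta| + |p/q - \alpha| < 2Y_F^{-n/2}$, this gives the contradiction once $\chi_n \geq 2$ (to kill the $H^n$ factor) and $\pi_n \geq 5\log 2 + 2\log n$ (to kill the remaining constant). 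Your sketch's third step, involving improper critical points $\eta_k$ and an exceptional point $\tau$, is correspondingly more complicated than needed and does not obviously produce the stated inequality $|p/q - \beta| \geq |\eta_k - \tau|$; in any case it is entirely superseded by the uniform root-separation argument.
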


Given an algebraic $\beta$, Lemma 2 of \cite{Bombieri1987} only counts the number of rational numbers which are nearest to $\beta$ out of all of the conjugates of $\beta$ and which form good approximations of $\beta$.  If there were a real root $\alpha > 1$ of $f(x)$ and a large special solution $(p,q)$ of \eqref{eq:thueEq} for which $p/q$ belonged to $\alpha$ yet there was a root $\beta$ of $f(x)$ with $\beta$ closer to $p/q$ than $\alpha$, Lemma 2 of \cite{Bombieri1987} would not count $p/q$.  However, Lemma \ref{belongsToImpliesClosest} confirms that this is not the case.

We first prove these two lemmas:

\begin{proof}[Proof of Lemma \ref{largeYF}]
	$Y_0$ depends on the Mahler measure $M(F)$ rather than the height $H(F)$.  These are related (for trinomials $F(x,y)$) by $M(F) \leq 3^{1/2}H(F)$, which follows from the fact that $M(F) \leq \ell_2(F)$ (see Lemma 1.6.7 in \cite{BombieriEnrico2001Hidg}).  Now, using Thomas' notation, we have that \[Y_0 := \left(2C\right)^{\frac{1}{n-\lambda}}\left(4e^{A_1}\right)^{\frac{\lambda}{n - \lambda}}\] where
	\begin{align*}
		C 	&= (2n^{1/2} M(F))^n\\
		t 	&= \sqrt{\frac{2}{n + a^2}}\\
		A_1 &= \frac{t^2}{2 - nt^2}\left(\log M(F) + \frac{n}{2}\right)\\
		\lambda &= \frac{2}{t(1 - b)}.
	\end{align*}

	Some of our other constants regularly appear in the estimation which shows $Y_0 < Y_F$ and we list them here for simplicity:

	\begin{align*}
		&A = \frac{1}{a^2} =  \frac{2}{2(n + a^2) - 2n} = \frac{2}{n + a^2}\cdot\frac{1}{2-\frac{2n}{n + a^2}} = \frac{t^2}{2 - nt^2}\\
		&L = \frac{\sqrt{2(n + a^2)}}{1-b} = \frac{2}{1 - b} \sqrt{\frac{n + a^2}{2}} = \frac{2}{t(1 - b)} = \lambda \\
		&D = \frac{L}{n - L} = \frac{\lambda}{n - \lambda}
	\end{align*}
	Note also that this implies that $D + 1 = \frac{n}{n - \lambda}$.

	Before making the final estimate, we take a moment to observe that \[\left(\sqrt{3}\right)^{\frac{n}{n-\lambda} + AD} < 2^{\frac{n-1}{n-\lambda}}.\]  This estimate is tedious, but not difficult.  One can show that \[\left(\sqrt{3}\right)^{\frac{n}{n-\lambda} + AD} < 2^{\frac{n-1}{n-\lambda}}\] occurs if and only if \[2 < \left(\frac{2}{\sqrt{3}}\right)^{n + A\lambda}.\]  Estimating $A$ from below by \[A > \frac{1}{(1 - \sqrt{2(n+1)/n^2})^2}\] and estimating $\lambda$ from below by $\lambda > \sqrt{2n}$ gives that \[2 < \left(\frac{2}{\sqrt{3}}\right)^{n + A\lambda}\] is implied by \[2 < \left(\frac{2}{\sqrt{3}}\right)^{n + \frac{n^2\sqrt{2n}}{n^2-2n\sqrt{2n + 2} + 2n + 2}}.\]  Upon observing that \[n + \frac{n^2\sqrt{2n}}{n^2-2n\sqrt{2n + 2} + 2n + 2} \geq 20\] when $n \geq 6$ for instance, one can now see that \[2 < \left(\frac{2}{\sqrt{3}}\right)^{n + \frac{n^2\sqrt{2n}}{n^2-2n\sqrt{2n + 2} + 2n + 2}}\] and as a result, we must have \[\left(\sqrt{3}\right)^{\frac{n}{n-\lambda} + AD} < 2^{\frac{n-1}{n-\lambda}}.\]
	We can now conclude
	\begin{align*}
		Y_0 &= (2C)^{\frac{1}{n-\lambda}}(4e^{A_1})^{\frac{\lambda}{n - \lambda}}\\
			&= (2(2n^{1/2}M(F))^n)^{\frac{1}{n - \lambda}}(4e^{A(\log M(F) + \frac{n}{2})})^{\frac{\lambda}{n - \lambda}}\\
			&= 2^{\frac{1 + n + 2\lambda}{n-\lambda}}\cdot n^{\frac{n}{2(n - \lambda)}}\cdot M(F)^{\frac{n}{n - \lambda}}e^{AD(\log M(F) + \frac{n}{2})}\\
			&= 2^{\frac{1 + n + 2\lambda}{n-\lambda}}\cdot n^{\frac{n}{2(n - \lambda)}}\cdot M(F)^{\frac{n}{n - \lambda} + AD}\cdot e^{\frac{ADn}{2}}\\
			&\leq 2^{\frac{1 + n + 2\lambda}{n-\lambda}}\cdot n^{\frac{n}{2(n - \lambda)}}\cdot (\sqrt{3}H(F))^{\frac{n}{n - \lambda} + AD}\cdot e^{\frac{ADn}{2}}.
	\end{align*}
	Next we use the fact that $\left(\sqrt{3}\right)^{\frac{n}{n-\lambda} + AD} < 2^{\frac{n-1}{n-\lambda}}$ to find that
	\begin{align*}
		Y_0	&< 2^{\frac{1 + n + 2\lambda}{n-\lambda} + \frac{n - 1}{n - \lambda} + AD}\cdot n^{\frac{n}{2(n - \lambda)}}\cdot H(F)^{\frac{n}{n - \lambda} + AD}\cdot e^{\frac{ADn}{2}}\\
			&= 2^{\frac{2n + 2\lambda}{n - \lambda} + AD} \cdot n^{\frac{D + 1}{2}} \cdot H(F)^{1 + D + AD} \cdot e^{\frac{ADn}{2}}\\
			&= H(F)^{\chi_n} \cdot \exp\left(\left(\frac{2n + 2\lambda}{n - \lambda} + AD\right)\log(2) + \frac{D + 1}{2}\log n + \frac{ADn}{2}\right)\\
			&= H(F)^{\chi_n} \cdot \exp\left(\left(\frac{4\lambda + 2(n - \lambda)}{n - \lambda} + AD\right)\log(2) + \frac{D + 1}{2}\log n + \frac{ADn}{2}\right)\\
			&= H(F)^{\chi_n} \cdot \exp\left(\left(4D + 2 + AD\right)\log(2) + \frac{D + 1}{2}\log n + \frac{ADn}{2}\right)\\
			&= H(F)^{\chi_n} \cdot e^{\pi_n}\\
			&= Y_F.
	\end{align*}
\end{proof}

\begin{proof}[Proof of Lemma \ref{belongsToImpliesClosest}]
	Since $\frac{p}{q}$ is a large special solution, we have
	\begin{align*}
		p > q \geq Y_F &= H^{\chi_n}e^{\pi_n}.
	\end{align*}
	Since $\frac{p}{q}$ belongs to $\alpha$, Thomas' Corollary 3.1 in \cite{Thomas2000} indicates that
	\begin{align*}
		\left|\frac{p}{q} - \alpha\right| 	&< \frac{1}{p^{n^*}q}\\
											&< \frac{1}{Y_F^{n/2}}.
	\end{align*}

	Suppose, by contradiction, that there exists $\beta \in \C$ with $f(\beta) = 0$ and $\left|\frac{p}{q} - \beta\right| < \left|\frac{p}{q} - \alpha\right|$.  Then by the triangle equality, we find that
	\begin{align}
		\left|\alpha - \beta\right| &\leq \left|\frac{p}{q} - \beta\right| + \left|\frac{p}{q} - \alpha\right|\nonumber\\
									&< \frac{2}{Y_F^{n/2}}\label{eq:distBetweenRoots}
	\end{align}
	Since $\alpha$ and $\beta$ are distinct roots of $f$, Theorem 4 in \cite{Rump1979} indicates that
	\begin{align}
		|\alpha - \beta| > \frac{1}{2n^{n/2 + 2} (4H)^n} = \frac{1}{2^{2n + 1}n^{n/2 + 2}H^n}\label{eq:RumpRootSep}.
	\end{align}
	Combining \eqref{eq:distBetweenRoots} and \eqref{eq:RumpRootSep}, we find that \[\frac{1}{2^{2n + 1}n^{n/2 + 2}H^n} < \frac{2}{Y_F^{n/2}}\] and rearranging yields \[\frac{Y_F^{n/2}}{2^{2n + 2}n^{n/2 + 2}H^n} <  1.\]
	From here, we can use the fact that $Y_F = H^{\chi_n}e^{\pi_n}$ to find
	\begin{align*}
		1 	&> \frac{H^{n(\chi_n/2 - 1)}e^{n\pi_n/2}}{2^{2n + 2}n^{n/2 + 2}}\\
			&\geq \frac{e^{n\pi_n/2}}{2^{2n + 2}n^{n/2 + 2}}
	\end{align*}
	where the last inequality follows because $\chi_n \geq 2$.  After rearranging, this implies that
	\begin{align*}
		\pi_n 	&< \frac{(4n + 4)\log(2) + (n + 4)\log(n)}{n}\\
				&\leq 5\log(2) + 2\log(n)
	\end{align*}
	where the last inequality follows from the fact that $n \geq 6$.  However, the last inequality contradicts our hypothesis that $\pi_n \geq 5\log(2) + 2\log(n)$, so no such $\beta$ can exist and the closest root of $f(x)$ to $p/q$ is $\alpha$.
\end{proof}

Finally, we prove proposition \ref{largeSpecialSolutionBound}.

\begin{proof}[Proof of Proposition \ref{largeSpecialSolutionBound}.]
	Let $\alpha > 1$ be a real root of $f(x)$.  By lemma \ref{largeYF}, every large special solution $(p,q)$ so that $p/q$ belongs to $\alpha$ is large in the sense of \cite{Bombieri1987}.  Moreover, by lemma \ref{belongsToImpliesClosest}, any large special solution $(p,q)$ so that $p/q$ belongs to $\alpha$ has \[\left|\frac{p}{q} - \alpha\right| = \min_{f(\beta) = 0} \left|\frac{p}{q} - \beta\right|.\]  Hence, every large special solution $(p,q)$ so that $p/q$ belongs to $\alpha$ is large (in the sense of \cite{Bombieri1987}) and is nearest to $\alpha$ among all the roots of $f(X)$.  Lemma 2 of \cite{Bombieri1987} indicates that there are no more than \[Z = \left\lfloor\frac{\log E + 2\log(n) - \log(L - 2)}{\log(n-1)}\right\rfloor + 2\] large solutions $(p,q)$ so that $p/q$ is nearest to $\alpha$ among all the roots of $f(X)$ and so we conclude that there are no more than $Z$ large special solutions $(p,q)$ with $p/q$ belonging to $\alpha$.
\end{proof}


\subsection{Choosing Parameters for Large Degrees}

Begin by assuming $n \geq 507$.  We handle all smaller instances of $n$ computationally.

\begin{proposition}\label{TZForLargeN}
	For $n \geq 507,$ we can take $d_0 = \frac{n^*}{2}$, $d = n^*$, $a = \frac{1}{4}$, $C = 7/6$, $c = \frac{8}{9C^2 - 1}$, $b = 1 - \frac{\sqrt{2n + \frac{1}{8}}}{\frac{cn^2}{n-1} + 2}$ and obtain $T = 2$ and $Z = 2$.
\end{proposition}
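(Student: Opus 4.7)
The plan is to take the given parameter values and verify (a) all structural constraints assumed in the earlier propositions—namely \eqref{eq:UVReq}, $0 \leq d_0 \leq n^*-1.4$, $1 < d \leq n^*$, $Q_1^{d-1} > \max(1, K_d(n))$, $\chi_n \geq 2$, and $\pi_n \geq 5\log 2 + 2\log n$—together with (b) the equalities $Z = 2$ and $T = 2$. The key algebraic simplification is that the choice of $b$ is engineered so that $L = \sqrt{2(n+a^2)}/(1-b)$ collapses to $cn^2/(n-1) + 2$, giving $L - 2 = cn^2/(n-1)$ and $D = (cn^2 + 2(n-1))/((1-c)n^2 - 3n + 2)$. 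The upper bound on $b$ in \eqref{eq:UVReq} is equivalent to $L < n$, i.e. $c < (n-1)(n-2)/n^2$, which holds with enormous margin for $c = 32/45$. The constraints on $d_0, d$ are immediate, $\chi_n = 17D + 1 \geq 2$ since $D > 0$, and $\pi_n \geq 5\log 2 + 2\log n$ holds because $\pi_n$ contains the dominant term $8Dn$.

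For $Z = 2$, I reduce the condition $\lfloor (\log E + 2\log n - \log(L-2))/\log(n-1) \rfloor = 0$ to $En^2 < (L-2)(n-1) = cn^2$, i.e. $E < c$. With $s := \sqrt{2n+1/8}$ and $b = 1 - s/L$, one has $E = 1/(2(3/4 - s/L)(5/4 - s/L))$; since $s/L$ decreases in $n$ (as $L$ grows like $cn$ and $s$ like $\sqrt{2n}$), $E$ is decreasing in $n$ with limit $1/(2 \cdot 3/4 \cdot 5/4) = 8/15 < 32/45 = c$, so a single numerical verification $E(507) < c$ extends to all $n \geq 507$ by monotonicity.

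For $T = 2$, both arguments of the max in \eqref{eq:TDef} must be less than $1$. With $d_0 = n^*/2$ and $d = n^*$, one computes $d_0(d-1) + d = n(n-2)/8$, so the first condition becomes $\chi_n < (n-2)/8$; since $\chi_n = 17D + 1$ is decreasing in $n$ with limit $17 \cdot 32/13 + 1 \approx 42.85$ and $(n-2)/8 \geq 63.12$ for $n \geq 507$, a single check at $n = 507$ suffices. The second condition becomes $\pi_n < (n^*-1)\log(K_d(n)^{-1/(d-1)} Q_1)$; expanding via $K_d(n) = m_n r^{n^*}$ and $Q_1 = 2^{n^*/2}/(m_n r^{n^*/2})$ with $r := r_n(1 + u_n)$, the right-hand side grows like $(n^*)^2 (\log 2)/2$ while $\pi_n$ grows only linearly in $n$, giving enormous slack; the same expansion also confirms $Q_1^{d-1} > K_d(n)$.

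The main obstacle is the second $T = 2$ condition, which requires carefully unpacking the logarithm of $K_d(n)^{-1/(d-1)} Q_1$ in terms of $m_n$, $r_n$, $u_n$. The calculation is tractable because $u_n$ is super-exponentially small for $n \geq 9$ (owing to $p_0 = 2$ appearing as $u_n = \sqrt{2/((n-2)p_0^n)}$), $\log r_n = \log(2.032)/n = O(1/n)$, and $m_n = \Theta(1/\sqrt{n})$; crude estimates with these reduce the target inequality to an elementary comparison of polynomials in $n$, which is verified at $n = 507$ and then extended to all $n \geq 507$ by monotonicity.
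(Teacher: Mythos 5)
Your proposal follows essentially the same route as the paper: verify the structural constraints, reduce $T=2$ to $\chi_n < (n-2)/8$ and to the inequality $\pi_n < \log(K_d(n)^{-1}Q_1^{d-1})$, reduce $Z=2$ to $E < c$, and observe that the quadratic growth of $\log(K_d(n)^{-1}Q_1^{d-1})$ in $n$ (led by the $\tfrac{(n^*)^2}{2}\log 2$ term) dominates the linear growth of $\pi_n$. The paper's execution differs only cosmetically: it works with explicit two-sided bounds valid for $n \geq 507$ (e.g.\ $\tfrac{32}{45}n \leq L \leq \tfrac{32}{45}n+3$, $D\leq 2.54$, $K_d(n)\leq 4e\sqrt{3/(n-2)}$, $b>0.87509$, $E<0.711$) instead of monotonicity-plus-single-check arguments; if you use the latter, you do owe a proof that $s/L$ (hence $E$ and $D$) is monotone on $[507,\infty)$, which is routine but not free.

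A few small slips to fix. The justification "$\chi_n = 17D+1 \geq 2$ since $D>0$" doesn't work as written: $D>0$ only gives $\chi_n>1$. You actually need $D \geq 1/17$, which you already have in much stronger form from $D \to 32/13 \approx 2.46$; just cite that lower bound on $D$ instead. You also verify $Q_1^{d-1} > K_d(n)$ but not the other half of \eqref{eq:Q1size}, namely $Q_1^{d-1} > 1$; the paper gets this cheaply from the explicit bound $K_{n^*/2}(n) \leq 5e^{1/4}$, so $Q_1 \geq 2^{n^*/2}/(5e^{1/4}) \gg 1$. Your "same expansion" does yield it (since $m_n r^{n^*/2} \to 0$ forces $Q_1 \to \infty$), but it should be stated. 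Finally, for $n \geq 9$ one has $p_0(n) = 2$, so your substitution $Q_1 = 2^{n^*/2}/(m_n r^{n^*/2})$ is valid throughout the range $n \geq 507$, but that hypothesis deserves an explicit mention since $p_0(n)$ is piecewise.
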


Observe first that these are the smallest possible values of $T$ and $Z$.

\begin{proof}
	To show this, we first must show that these choices of $d_0, d, a,$ and $b$ are valid.

	Certainly $0 \leq d_0 \leq n^* - 1.4$ and $1 < d \leq n^*$.  All that remains to show for $d_0$ and $d$ is \eqref{eq:Q1size}.  We have
	\begin{align*}
		Q_1^{d - 1}	&= \left(\frac{p_0^{c_0}}{K_0}\right)^{d-1}\\
					&\geq \left(\frac{2^{n^*/2}}{K_{n^*/2}(n)}\right)^{\frac{n^*}{2} - 1}.
	\end{align*}
	But observe that
	\begin{align*}
		K_{n^*/2}(n) 	&= 2\sqrt{\frac{2n}{(n-1)(n-2)}}\left(2.032^{1/n}\left(1 + \sqrt{\frac{2}{(n-2)p_0^n}}\right)\right)^{n^*/2}\\
						&\leq 2 \cdot 2.032\left(1 + \frac{1}{\sqrt{(n-2)2^{n-1}}}\right)^{\frac{n-2}{4}}\\
						&\leq 5\left(1 + \frac{1}{n-2}\right)^{\frac{n-2}{4}}\\
						&\leq 5e^{1/4}
	\end{align*}
	so $Q_1^{d-1}$ is certainly greater than 1.  Similar reasoning shows that $K_d(n) \leq 5e^{1/2}$, so it is certainly also the case that \[K_d(n) \leq 5e^{1/2} < \left(\frac{2^{n^*/2}}{5e^{1/4}}\right)^{\frac{n^*}{2} - 1} \leq \left(\frac{2^{n^*/2}}{K_{n^*/2}(n)}\right)^{\frac{n^*}{2} - 1} = Q_1^{b-1}.\]  Hence, our choices of $d$ and $d_0$ are valid.

	Next, we wish to check that our choices for $a$ and $b$ are valid.  To check $0 < a < b$, note that
	\begin{align}
		b 	&= 1 - \frac{\sqrt{2n + \frac{1}{8}}}{\frac{cn^2}{n-1} + 2} \geq 1 - \frac{2\sqrt{n}}{\frac{cn^2}{n-1}} = 1 - \frac{2(n-1)\sqrt{n}}{cn^2} \geq 1 - \frac{2}{c\sqrt{n}}\nonumber\\
			&\geq 1 - \frac{2}{c\sqrt{507}}> \frac{1}{4} = a.\label{vLowerBound}
	\end{align}
	To check that $b < 1 - \frac{\sqrt{2n + 2a^2}}{n}$, it suffices to show that $n > \frac{cn^2}{n-1} + 2$.  But this occurs if and only if $(1-c)n^2 - 3n + 2 > 0$, i.e. $n > \frac{3 + \sqrt{9 - 8(1-c)}}{2(1-c)} \approx 9.66$, which we certainly have.

	To show that $T = 2$, we claim that we have the following two inequalities (and from equation \eqref{eq:TDef}, it will follow that $T = 2$):
	\begin{align}
		\frac{\chi_n}{\frac{d_0(d-1)+d}{n(d-1)}} + 1 < d \label{eq:ChiReq}\\
		\frac{\pi_n}{\log K_d(n)^{-\frac{1}{d-1}}Q_1} + 1 < d \label{eq:PiReq}
	\end{align}

	We first show \eqref{eq:ChiReq}.  Substituting $d_0 = \frac{n^*}{2}$ and $d = n^*$, observe that \eqref{eq:ChiReq} is equivalent to \[\chi_n < \frac{\left(\frac{n-2}{4}\right)\left(\frac{n-4}{2} - 1\right) + \frac{n-2}{2}}{n} = \frac{n-2}{8} = \frac{n^*}{4}.\]
	Keeping an eye on the definition of $\chi_n$ given in equation \eqref{eq:chiDef}, we have that
	\begin{align*}
		A &= 16\\
		C &= \frac{7}{6}\\
		c &= \frac{8}{9C^2 - 1} = \frac{32}{45}\\
		b &= 1 - \frac{\sqrt{2n + \frac{1}{8}}}{\left(\frac{cn^2}{n - 1} + 2\right)}.
	\end{align*}
	All of these together yield
	\begin{align}\label{eq:LDef}
		L = \frac{cn^2}{n-1} + 2 = \frac{32}{45}\left(n + 1 + \frac{1}{n - 1}\right) + 2
	\end{align}
	and it is now easy to check that \[\frac{32}{45}n \leq L \leq \frac{32}{45}n + 3.\]

	From here we have
	\begin{align*}
		D 		&= \frac{L}{n - L} \leq \frac{\frac{32}{45}n + 3}{n - \left(\frac{32}{45}n + 3\right)} = \frac{\frac{32}{45}n + 3}{\frac{13}{45}n - 3} = \frac{32}{13} + \frac{6075}{13(13n - 135)} \leq 2.54 \\
		D 		&\geq \frac{\frac{32}{45n}}{n - \frac{32}{45}n} = \frac{32}{13} \approx 2.46
	\end{align*}
	when we use the fact that $n \geq 507$.  To convert these into estimates on $\chi_n$, we have
	\begin{align}
		\chi_n 	&= 17D + 1 \leq \frac{94853}{2152} \leq 44.08\label{eq:chiNUpperBound}\\
		\chi_n 	&= 17D + 1 \geq \frac{557}{13} \geq 42.8.\label{eq:chiNLowerBound}
	\end{align}	
	Since $n \geq 507$, we now have $\chi_n \leq 44.08 < \frac{n-2}{8}$ which confirms equation \eqref{eq:ChiReq}.

	Equation \eqref{eq:PiReq} is more complicated to handle.  Observe that by equation \eqref{eq:piDef}, we have
	\begin{align}\label{eq:piNUpperBound}
		\pi_n 	&= (D(4 + A) + 2)\log 2 + \frac{(D + 1)\log n}{2} + \frac{ADn}{2}\nonumber\\
			&\leq 36.6 + 1.77\log n + 20.28n\nonumber\\
			&\leq 37 + 21n.
	\end{align}
	For reference later, we will also note
	\begin{align}\label{eq:piNLowerBound}
		\pi_n 	&\geq 35.5 + 1.7\log n + 19.6n\nonumber\\
				&\geq 46 + 19n.
	\end{align}
	It will additionally be helpful for us to have an estimate on $K_d(n)$.  We have
	\begin{align*}
		K_d(n) 	&= 2\sqrt{\frac{2n}{(n-1)(n-2)}}\left(2.032^{1/n}\left(1+\sqrt{\frac{2}{(n-2)p_0^n}}\right)\right)^d\\
				&\leq 4\sqrt{\frac{3n-3}{(n-1)(n-2)}}\left(1+\sqrt{\frac{2n-4}{(n-2)p_0^n}}\right)^{d}\\
				&\leq 4\sqrt{\frac{3}{n-2}}\left(1+\sqrt{\frac{2}{p_0^n}}\right)^{n^*}.
	\end{align*}
	Now, since $p_0 \geq 2$, we have
	\begin{align}\label{eq:KbUpperBound}
		K_d(n)	&\leq 4\sqrt{\frac{3}{n-2}}\left(1+\sqrt{\frac{2}{2^n}}\right)^{\frac{n-2}{2}}
				\leq 4\sqrt{\frac{3}{n-2}}\left(1+\frac{1}{2^{\frac{n-2}{2}}}\right)^{\frac{n-2}{2}}\nonumber\\
				&\leq 4\sqrt{\frac{3}{n-2}}\left(1 + \frac{1}{\left(\frac{n-2}{2}\right)}\right)^{\frac{n-2}{2}} 
				\leq 4e\sqrt{\frac{3}{n-2}}
	\end{align}
	and similar reasoning yields
	\begin{align}\label{eq:KbzeroUpperBound}
		K_{d_0}(n) \leq 4\sqrt{\frac{3}{n-2}}\left(1 + \frac{1}{2^{n^*}}\right)^{n^*/2}.
	\end{align}
	
	Combining the upper bounds in equations \eqref{eq:piNUpperBound}, \eqref{eq:KbUpperBound}, and \eqref{eq:KbzeroUpperBound} with the fact that for $n \geq 270$, \[\pi_n < 37+21n < \frac{\log(1.9)}{8}(n-2)(n-4)\] yields
	\begin{align*}
		(d-1)\log\left[K_d(n)^{-\frac{1}{d-1}}Q_1\right]
		&= \log\left[K_d(n)^{-1}Q_1^{d-1}\right]\\
		&\geq \log\left[\frac{1}{4} \cdot \inv{e} \cdot \left(\frac{n-2}{3}\right)^{1/2} \cdot \left(\frac{p_0^{c_0}}{K_0}\right)^{n^*-1}\right]\\
		&\geq \log\left[\frac{1}{4} \cdot \inv{e} \cdot \left(\frac{n-2}{3}\right)^{1/2}\left(\frac{2^{n^*/2}}{K_{d_0}(n)}\right)^{n^*-1}\right]\\
		&\geq \log\left[\frac{1}{4} \cdot \inv{e} \cdot \left(\frac{n-2}{3}\right)^{1/2}\left(\frac{2^{n^*/2}}{\left(1+2^{-n^*}\right)^{n^*/2}}\right)^{\frac{n-4}{2}}\left(\frac{1}{4}\sqrt{\frac{n-2}{3}}\right)^{\frac{n-4}{2}}\right]\\
		&\geq \log\left[\left(\frac{1}{4}\right)^{\frac{n-2}{2}}\left(\frac{n-2}{3}\right)^{\frac{n-2}{4}}\inv{e}\left(\frac{2}{1+2^{-n^*}}\right)^{\left(\frac{n-2}{4}\right)\left(\frac{n-4}{2}\right)}\right]\\
		&\geq \log\left[\left(\frac{n-2}{48}\right)^{\frac{n-2}{4}}\inv{e}\cdot 1.9^{\frac{(n-2)(n-4)}{8}}\right]\\
		&\geq \frac{\log1.9}{8}(n-2)(n-4) + \frac{n-2}{4}\log\left(\frac{n-2}{48}\right) - 1\\
		&\geq \frac{\log(1.9)}{8}(n-2)(n-4)\\
		&> \pi_n
	\end{align*}
	which now implies that equation \eqref{eq:PiReq} is satisfied.  Hence, we conclude that $T = 2$.

	Finally, we check that $Z = 2$.  In order to use Proposition \ref{largeSpecialSolutionBound}, we must verify that  $\chi_n \geq 2$ and $\pi_n \geq 5\log(2) + 2\log(n)$.  However, these quickly follow from \eqref{eq:chiNLowerBound} and \eqref{eq:piNLowerBound}.

	As before in equation \eqref{vLowerBound}, we have $b \geq 1 - \frac{2}{c\sqrt{507}} > 0.87509$, so
	\begin{align*}
		E = \frac{1}{2(b^2 - a^2)} < 0.711 < c
	\end{align*}
	and so (also using \eqref{eq:LDef})
	\begin{align*}
		\frac{\log E + 2\log(n) - \log(L - 2)}{\log(n-1)} < \frac{\log\left(\frac{cn^2}{L-2}\right)}{\log(n-1)} = 1.
	\end{align*}
	Therefore, by \eqref{eq:ZDef}, we note that $Z = 2$.
\end{proof}

Note that Proposition \ref{TZForLargeN} proves Lemma \ref{trinomialRealRootCount} for $n \geq 507$.


\subsection{Choosing Parameters for Small Degrees}\label{paramsForSmallN}

For $n \leq 506$, we make parameter choices listed in the Jupyter notebook listed here:

\begin{center}
\url{https://pages.uoregon.edu/gknapp4/files/trinomial_computations.ipynb}
\end{center}

One can check (and the code checks this automatically) that the parameter choices satisfy \eqref{eq:Q1size}, \eqref{eq:UVReq} along with the necessary bounds on $\pi_n$ and $\chi_n$ in order to use proposition \ref{largeSpecialSolutionBound}, and yield the $T$ and $Z$ values giving $z(n) = T + Z + 1$ when $6 \leq n \leq 8$ and $z(n) = T + Z$ for $n \geq 9$.  Therefore, these computations verify Lemma \ref{trinomialRealRootCount} for $n \leq 506$, which concludes our investigation.

In brief, the code picks a value of $n$, sets $d = n^*$, brute force loops over a number of valid values for the parameters $d_0,a,b$, computes the corresponding $T$ and $Z$ values defined in equations \eqref{eq:TDef} and \eqref{eq:ZDef}, and records the values of $d_0,a,$ and $b$ which minimize $T+Z$.  The exact code can be found in appendix \ref{pythonCode} and the following table contains a summary of some of the more interesting data points listed in the Jupyter notebook.

\begin{longtable}{*7{|c|}}
\hline
	$n$ &   $d_0$ &   $d$ &      $a$ &      $b$ &   $T$ &   $Z$ \\
\hline
\endhead
	6 &   0     &   2   &  0.18 &  0.29 &    10 &     4 \\
	7 &   0.55  &   2.5 &  0.2  &  0.28 &     7 &     4 \\
	8 &   0.992 &   3   &  0.16 &  0.41 &     7 &     3 \\
	9 &   0.882 &   3.5 &  0.17 &  0.4  &     6 &     3 \\
	10 &   1.196 &   4   &  0.23 &  0.41 &     5 &     3 \\
	11 &   1.705 &   4.5 &  0.14 &  0.37 &     5 &     3 \\
	12 &   2.088 &   5   &  0.27 &  0.41 &     4 &     3 \\
	13 &   2.255 &   5.5 &  0.2  &  0.37 &     4 &     3 \\
	14 &   2.53  &   6   &  0.16 &  0.35 &     4 &     3 \\
	15 &   3.009 &   6.5 &  0.13 &  0.34 &     4 &     3 \\
	16 &   3.136 &   7   &  0.11 &  0.32 &     4 &     3 \\
	17 &   3.965 &   7.5 &  0.33 &  0.43 &     3 &     3 \\
	18 &   4.29  &   8   &  0.27 &  0.39 &     3 &     3 \\
	$\vdots$ & $\vdots$ & $\vdots$ & $\vdots$ & $\vdots$ & $\vdots$ & $\vdots$ \\
	37 &   7.728 &  17.5 &  0.08 &  0.25 &     3 &     3 \\
	38 &   9.794 &  18   &  0.07 &  0.25 &     3 &     3 \\
	39 &  11.97  &  18.5 &  0.43 &  0.47 &     2 &     3 \\
	40 &  12.144 &  19   &  0.41 &  0.46 &     2 &     3 \\
	$\vdots$ & $\vdots$ & $\vdots$ & $\vdots$ & $\vdots$ & $\vdots$ & $\vdots$ \\
	218 &  57.564 & 108   &  0.03 &  0.87 &     3 &     2 \\
	219 & 68.2227 & 108.5 & 0.399258 & 0.883258 &     2 &     2 \\
	220 & 68.6488 & 109   & 0.408477 & 0.884477 &     2 &     2 \\
	$\vdots$ & $\vdots$ & $\vdots$ & $\vdots$ & $\vdots$ & $\vdots$ & $\vdots$ \\
	506 & 218.022 & 252   & 0.517076 & 0.927076 &     2 &     2 \\
	507 & 218.457 & 252.5 & 0.517138 & 0.927138 &     2 &     2 \\
\hline
\end{longtable}

\subsection{Attaining Bounds With Examples}

Theorem \ref{totalTrinomialSolutions} indicates that for $n \geq 219$, there are no more than 40 distinct solutions to equation \eqref{eq:thueEq} when $F(x,y)$ is a trinomial.  For smaller $n$, this upper bound is even larger.  Of interest is whether or not it is possible to find a particular trinomial for which \eqref{eq:thueEq} has 40 distinct solutions.  The computer algebra system GP has a method called \verb thue \,which, on input a Thue equation, will output the solutions to that Thue equation\footnote{While the accuracy of GP's thue method relies on the Generalized Riemann Hypothesis to solve the Thue equation $F(x,y) = h$, our use of it does not because thue does not assume GRH when solving the specific equation $F(x,y) = \pm 1$.}.  The author has used this method to create a method in Sage which, on input a degree $n$ and height $H$, will compute the solutions to every trinomial Thue equation of degree $n$ and height $H$.  The method can be found in appendix \ref{sageMethod} and the raw data can be found on the author's website at:

\begin{center}
\url{https://pages.uoregon.edu/gknapp4/files/trinomial_solution_data.zip}
\end{center}

The maximal number of solutions to equation \eqref{eq:thueEq} for a trinomial $F(x,y)$ of degree $n$ and height $H$ are found in the table below.  Notably, no trinomial has been found with more than 8 solutions to \eqref{eq:thueEq}, which is far from the upper bound of 40.  A hyphen in the table means that the case in question has not yet been computed.

\begin{table}[h]
	\centering
	\begin{tabular}{|c | c | c | c | c | c | c | c | c | c | c | c|}\hline
		 		& $H = 1$ & $H = 2$ & $H = 3$ & $H = 4$ & $H = 5$ & $H = 6$ & $H = 7$ & $H = 8$ & $H = 9$ & $H = 10$ & $H = 11$\\\hline
		$n = 6$ & $8$ & $6$ & $8$ & $8$ & $6$ & $6$ & $6$ & $6$ & $8$ & $6$ & $6$\\\hline
		$n = 7$ & $8$ & $6$ & $8$ & $8$ & $6$ & $6$ & $6$ & $6$ & $8$ & $6$ & $6$\\\hline
		$n = 8$ & $8$ & $6$ & $8$ & $8$ & $6$ & $6$ & $6$ & $6$ & $8$ & $6$ & $6$\\\hline
		$n = 9$ & $8$ & $6$ & $8$ & $8$ & $6$ & $6$ & $6$ & $6$ & $8$ & $6$ & $6$\\\hline
		$n = 10$ & $8$ & $6$ & $8$ & $8$ & $6$ & $6$ & $6$ & $6$ & $8$ & - & - \\\hline
		$n = 11$ & $8$ & $6$ & $8$ & $8$ & $6$ & $6$ & $6$ & $6$ & $8$ & - & - \\\hline
		$n = 12$ & $8$ & $6$ & $8$ & $8$ & $6$ & $6$ & $6$ & - & - & - & - \\\hline
		$n = 13$ & $8$ & $6$ & $8$ & $8$ & $6$ & $6$ & - & - & - & - & - \\\hline
		$n = 14$ & $8$ & $6$ & $8$ & $8$ & $6$ & $6$ & - & - & - & - & - \\\hline
		$n = 15$ & $8$ & $6$ & $8$ & - & - & - & - & - & - & - & -\\\hline
	\end{tabular}
	\caption{The maximal number of solutions to equation \eqref{eq:thueEq} for a trinomial $F(x,y)$ of degree $n$ and height $H$.}
\end{table}

\section{Acknowledgments}

I would like to thank Shabnam Akhtari for her guidance on this project and paper, Ben Young for his suggestions on the computations and feedback on my code, and Cathy Hsu for her mentorship and general research advice.  This research was supported by NSF grant number 2001281.

\bibliographystyle{plain}

\bibliography{library}

\begin{thebibliography}{10}

\bibitem{Akhtari2020}
Shabnam Akhtari and Paloma Bengoechea.
\newblock {Representation of integers by sparse binary forms}.
\newblock {\em Trans. Amer. Math. Soc.}, 374(3):1687--1709, dec 2020.

\bibitem{Bennett2001}
Michael~A. Bennett.
\newblock Rational approximation to algebraic numbers of small height: the
  {D}iophantine equation {$|ax^n-by^n|=1$}.
\newblock {\em J. Reine Angew. Math}, 535:1--49, 2001.

\bibitem{BombieriEnrico2001Hidg}
Enrico Bombieri and Walter Gubler.
\newblock {\em {Heights in Diophantine Geometry}}, volume~4 of {\em New
  Mathematical Monographs}.
\newblock Cambridge University Press, January 2006.

\bibitem{Bombieri1987}
Enrico Bombieri and Wolfgang~M. Schmidt.
\newblock On {T}hue's equation.
\newblock {\em Invent. Math.}, 88(1):69--81, 1987.

\bibitem{Evertse1982}
Jan-Hendrik Evertse.
\newblock On the equation {$ax^{n}-by^{n}=c$}.
\newblock {\em Compos. Math.}, 47(3):289--315, 1982.

\bibitem{Grundman2013}
Helen~G. Grundman and Daniel~P. Wisniewski.
\newblock Tetranomial {T}hue equations.
\newblock {\em J. Number Theory}, 133(12):4140--4174, 2013.

\bibitem{Hyyroe1964}
Seppo Hyyr\"{o}.
\newblock \"{U}ber die {G}leichung {$ax^{n}-by^{n}=z$} und das {C}atalansche
  {P}roblem.
\newblock {\em Ann. Acad. Sci. Fenn. Ser. A I No.}, 355:50, 1964.

\bibitem{Mueller1987}
Julia Mueller.
\newblock {Counting Solutions of $|ax^r - by^r| \leq h$}.
\newblock {\em Q. J. Math.}, 38(152):503--513, 1987.

\bibitem{Mueller1986}
Julia Mueller and Wolfgang~M. Schmidt.
\newblock Trinomial {T}hue equations and inequalities.
\newblock {\em J. Reine Angew. Math.}, 379(379):76--99, 1987.

\bibitem{Rump1979}
Siegfried~M. Rump.
\newblock Polynomial minimum root separation.
\newblock {\em Math. Comp.}, 33(145):327--336, 1979.

\bibitem{Saradha2017}
N.~Saradha and Divyum Sharma.
\newblock Contributions to a conjecture of {M}ueller and {S}chmidt on {T}hue
  inequalities.
\newblock {\em Proc. Indian Acad. Sci. Math. Sci.}, 127(4):565--584, 2017.

\bibitem{Schmidt1987}
Wolfgang~M. Schmidt.
\newblock Thue equations with few coefficients.
\newblock {\em Trans. Amer. Math. Soc.}, 303(1):241--255, 1987.

\bibitem{Thomas2000}
Emery Thomas.
\newblock {Counting solutions to trinomial Thue equations: a different
  approach}.
\newblock {\em Trans. Amer. Math. Soc.}, 352(8):3595--3622, 2000.

\bibitem{Thue1909}
Axel Thue.
\newblock \"{U}ber {A}nn\"{a}herungswerte algebraischer {Z}ahlen.
\newblock {\em J. Reine Agnew. Math}, 135(135):284--305, 1909.

\end{thebibliography}

\newpage

\appendix

\section{Python Code Minimizing $T + Z$} \label{pythonCode}

Here are the sequence of methods which minimize the value of $T + Z$ for a given parameter $n$.  For brevity, the version here does not include the clarifying comments which are present in the actual Jupyter notebook.

\begin{lstlisting}[language = Python]
from math import sqrt,floor,log
from collections import deque
import decimal
decimal.getcontext().prec = 10**4

def pZero(n):
    if (n <= 8):
        return 3.0
    else:
        return 2.0

def K(d,n):
    return 2*sqrt(float((2*n)/((n-1)*(n-2))))*(2.032**(1/n)*(1+sqrt(float(2/((n-2)* pZero(n)**n)))))**d

def star(n):
    return (n-2) * 0.5

def qOne(dZero,d,n):
    return pZero(n)**(star(n)-dZero)/K(dZero,n)

def validSmall(dZero,d,n):
    return (0<=dZero) and (dZero <= star(n)-1.4) and (1 < d) and (d <= star(n)) and (qOne(dZero,d,n)>max(1,K(d,n)**(1/(d-1))))

def validLarge(a,b,n):
    return (0<a) and (a < b) and (b < 1 - sqrt(float(2*(n + a**2)/n**2)))

def L(a,b,n):
    return sqrt(float(2*(n+a**2)))/(1-b)

def D(a,b,n):
    return L(a,b,n)/(n - L(a,b,n))

def A(a,b,n):
    return 1/a**2

def chiN(a,b,n):
    return D(a,b,n)*(A(a,b,n) + 1)+1

def piN(a,b,n):
    return (D(a,b,n)*(4 + A(a,b,n)) + 2)*float(log(2))+(D(a,b,n) + 1)*float(log(n))/2 + n*A(a,b,n)*D(a,b,n)/2

def E(a,b,n):
    return 1/(2*(b**2-a**2))

def Z(dZero,d,a,b,n):
    return floor((float(log(E(a,b,n))) + 2*float(log(n)) - float(log(L(a,b,n) - 2)))/float(log(n-1)))+2

def T(dZero,d,a,b,n):
    firstQuantity = float(log(chiN(a,b,n) * n * (d - 1) / (dZero * (d-1) + d) + 1))/float(log(d))
    secondQuantity = float(log(piN(a,b,n)/float(log(K(d,n)**(-1/(d-1))*qOne(dZero,d,n))) + 1))/float(log(d))
    return floor(max(firstQuantity,secondQuantity))+2

def minTPlusZ(nMin,nMax,prec):
    toReturn = []
    for n in range(nMin,nMax):
        nStar = star(n)
        aUpper = (2*n**2 - sqrt(4*n**4 - 4*(n**2 - 2*n)*(n**2 - 2)))/(2*(n**2 - 2))
        a = prec
        tempMinA=prec
        dZero = 0
        tempMinDZero = 0
        b = a + prec
        tempMinB = a + prec
        tempMinT = T(dZero,nStar,a,b,n)
        tempMinZ = Z(dZero,nStar,a,b,n)
        tempMinSum = tempMinT + tempMinZ
        while ((a <= aUpper) and (tempMinSum > 4)):
            while ((dZero <= nStar - 1.4) and (tempMinSum > 4)):
                while ((b < 1-sqrt(2*(n + a**2)/n**2)) and (tempMinSum > 4)):
                    tempT = T(dZero,nStar,a,b,n)
                    tempZ = Z(dZero,nStar,a,b,n)
                    if (tempT + tempZ < tempMinSum):
                        tempMinA = a
                        tempMinB = b
                        tempMinDZero = dZero
                        tempMinT = tempT
                        tempMinZ = tempZ
                        tempMinSum = tempT + tempZ
                    b += prec
                dZero += prec * (nStar - 1.4)
                b = a + prec
            a += prec
            dZero = 0
            b = a + prec
        assert validSmall(tempMinDZero,nStar,n), "d0,d,n are invalid"
        assert validLarge(tempMinA,tempMinB,n), "a,b,n are invalid"
        assert chiN(tempMinA,tempMinB,n) >= 2, "chiN is too small"
        assert piN(tempMinA,tempMinB,n) >= 5*log(2) + 2*log(n), "piN is too small"
        toReturn.append([n,tempMinDZero,nStar,tempMinA,tempMinB,tempMinT,tempMinZ])
    return toReturn

def minNWithMinTPlusZ(nMax,prec):
    n = nMax
    nStar = star(n)
    aUpper = (2*n**2 - sqrt(4*n**4 - 4*(n**2 - 2*n)*(n**2 - 2)))/(2*(n**2 - 2))
    a = aUpper - prec
    b = aUpper
    dZero = nStar - 1.4
    tempTPlusZ = 4
    listOfParams = deque([])
    while ((n >= 6) and (tempTPlusZ == 4)):
        tempT = T(dZero,nStar,a,b,n)
        tempZ = Z(dZero,nStar,a,b,n)
        tempTPlusZ = tempT + tempZ
        while ((a > 0) and (tempTPlusZ > 4)):
            while ((b > a) and (tempTPlusZ > 4)):
                while ((dZero >= 0) and (tempTPlusZ > 4)):
                    tempT = T(dZero,nStar,a,b,n)
                    tempZ = Z(dZero,nStar,a,b,n)
                    tempTPlusZ = tempT + tempZ
                    if (tempTPlusZ == 4):
                        assert validSmall(dZero,nStar,n), "d0,d,n are invalid"
                        assert validLarge(a,b,n), "a,b,n are invalid"
                        assert chiN(a,b,n) >= 2, "chiN is too small"
                        assert piN(a,b,n) >= 5*log(2) + 2*log(n), "piN is too small"
                        listOfParams.appendleft([n,dZero,nStar,a,b,tempT,tempZ])
                    dZero -= prec*(nStar - 1.4)
                b -= prec
                dZero = nStar - 1.4
            a -= prec
            b = aUpper
        n -= 1
        aUpper = (2*n**2 - sqrt(4*n**4 - 4*(n**2 - 2*n)*(n**2 - 2)))/(2*(n**2 - 2))
        nStar = star(n)
        a = aUpper - prec
        b = aUpper
        dZero = nStar - 1.4
    return listOfParams
\end{lstlisting}

\section{Sage Method For Computing Solutions to Trinomial Thue Equations}\label{sageMethod}

Here is the specific command which takes as input a degree $n$ and height $H$.  It finds every irreducible trinomial $F(x,y)$ with degree $n$ and height $H$, solves the Thue equation $|F(x,y)| = 1$, then stores the trinomials and their solution lists in a .csv file called \verb degree_n_height_H_thue_equations.csv .

\begin{lstlisting}[language = Python]
import itertools
import csv

R.<x> = ZZ[]

def TrinomialThueWriter(degree,height):
    filename = "thue_equation_solution_data/degree_{}_height_{}_thue_equations.csv".format(degree,height)
    columnHeads = ["Number of Solutions to |F(x,y)| = 1", "Leading Coefficient", "Middle Coefficient", "Constant Coefficient", "Middle Degree", "List of Solutions to |F(x,y)| = 1"]
    rows = []
    # Note that we only need to check positive leading coefficients since F(x,y) will have the same solutions as -F(x,y).
    # Note also that if the leading coefficient is larger than the absolute value of the constant coefficient, then we will have already computed the reciprocal polynomial F(y,x).  Hence, we can skip polynomials where the constant coefficient has absolute value less than the leading coefficient.
    for leadCoeff in range(1,height + 1):
        for midCoeff in itertools.chain(range(-height,0),range(1,height+1)):
            for constantCoeff in itertools.chain(range(-height,-leadCoeff+1), range(leadCoeff,height+1)):
                if (abs(leadCoeff) == height or abs(midCoeff)== height or abs(constantCoeff)== height) and (gcd(gcd(leadCoeff,midCoeff),constantCoeff)==1):
                    for midDegree in range(1,degree):
                        P = leadCoeff * x^degree + midCoeff * x^midDegree + constantCoeff
                        if P.is_irreducible():
                            thueInfo = gp.thueinit(P)
                            negSolns = gp.thue(thueInfo,-1)
                            posSolns = gp.thue(thueInfo,1)
                            totalSolns = len(negSolns)+len(posSolns)
                            rows.append([totalSolns, leadCoeff, midCoeff, constantCoeff, midDegree, gp.concat(posSolns,negSolns)])
    with open(filename,'w') as csvfile:
        csvwriter = csv.writer(csvfile)
        csvwriter.writerow(columnHeads)
        csvwriter.writerows(rows)
\end{lstlisting}

\end{document}